\documentclass[10pt]{amsart}
\usepackage{amssymb,latexsym, mathrsfs}
\usepackage{xypic}
\usepackage[all]{xy}
\usepackage{amscd}

\usepackage{color}

 \RequirePackage{amsbsy} 
 \RequirePackage{amsopn}  
 \RequirePackage{amsmath}
  \RequirePackage{amssymb}

  \usepackage[mathscr]{eucal}
\definecolor{darkmagenta}{rgb}{0.5, 0, 0.5}
\definecolor{darkblue}{rgb}{0.1, 0.05, 0.9}
\definecolor{darkgreen}{rgb}{0.05, 0.35, 0.6}
\definecolor{darkred}{rgb}{0.9, 0.1, 0.3}

	
	  \newcommand{\wS}{\wedge_{\{S\}}}
	  \newcommand{\FF}{\overline{\boldsymbol{F}}}
	  \newcommand{\F}{ {\boldsymbol{F}}}
	  \newcommand{\f}{ {\boldsymbol{f}}}

	  \newcommand{\ext}{\mbox{\rm\texttt{ext}}}
	  \newcommand{\e}{\mbox{\rm\tiny\texttt{e}}}
	
	  \newcommand{\dd}{\mbox{\rm\texttt{d}}}
	    \newcommand{\con}{\mbox{\rm\texttt{con}}}
	  \newcommand{\co}{\mbox{\rm\tiny\texttt{c}}}
	
	   \newcommand{\id}{\mbox{\rm\texttt{id}}}
	    
	       \newcommand{\vv}{\mbox{\rm\texttt{v}}}
	        \newcommand{\ww}{\mbox{\rm\texttt{w}}}
	            \newcommand{\ttt}{\mbox{\rm\texttt{t}}}
	            \newcommand{\ttts}{\mbox{\rm\tiny\texttt{t}}}
	         \newcommand{\wws}{\mbox{\rm\tiny\texttt{w}}}
	            \newcommand{\vvs}{\mbox{\tiny\rm\texttt{v}}}

	     \newcommand{\stf} {\star{_{\!{_f}}}}
\newcommand{\stt} {\widetilde{\star}}
\newcommand{\stu} {\overline{\star}}
\newcommand{\sta} {\star_{\mbox{\it\tiny\texttt{a}}}}


\newcommand{\SStar}{{\mbox{\rm\texttt{SStar}}}}
\newcommand{\SStarf}{\mbox{\rm\texttt{SStar}}_{\!\mbox{\tiny\it\texttt{f}}}}

\newcommand{\SStarstab}{\overline{\mbox{\rm\texttt{SStar}}}}

\newcommand{\SStarstabft}{\widetilde{\mbox{\rm\texttt{SStar}}}}
\newcommand{\SStarsp}{\mbox{\rm\texttt{SStar}}_{\!\mbox{\tiny\it\texttt{sp}}}}

\newcommand{\SStareabf}{\mbox{\rm\texttt{SStar}}_{\!\mbox{\tiny\it\texttt{f,eab}}}}
\newcommand{\SStareab}{\mbox{\rm\texttt{SStar}}_{\!\mbox{\tiny\it\texttt{eab}}}}
\newcommand{\SStarval}{\mbox{\rm\texttt{SStar}}_{\!\mbox{\tiny\it\texttt{val}}}}


\theoremstyle{plain}

\newtheorem{theorem}{Theorem}[section]

\newtheorem{corollary}[theorem]{Corollary}

\newtheorem{lemma}[theorem]{Lemma}

\newtheorem{proposition}[theorem]{Proposition}
\newtheorem{remark}[theorem]{Remark}

\DeclareMathOperator{\Spec}{Spec}
\DeclareMathOperator{\QSpec}{QSpec}

\DeclareMathOperator{\Max}{Max}
\DeclareMathOperator{\QMax}{QMax}

\DeclareMathOperator{\Assp}{Assp}

\newcommand{\sub}{\subseteq}

\begin{document}

\title[Intersectionns of quotient rings  and  P$\vv$MDs]{Intersections of quotient rings  and\\  Pr\"ufer $\vv$-multiplication domains}
\author[S. El Baghdadi]{Said El Baghdadi}
\address{(S. El Baghdadi) Department of Mathematics,
Beni Mellal University, 23000 Beni Mellal, Morocco}
\email{baghdadi@fstbm.ac.ma}

\author[M. Fontana]{Marco Fontana}
\address{(M. Fontana) Dipartimento di Matematica e Fisica,
Universit\`a degli Studi ``Roma Tre'', 00146 Rome, Italy}
\email{fontana@mat.uniroma3.it}

\author[M. Zafrullah]{Muhammad Zafrullah}
\address{ (M. Zafrullah) Department of Mathematics and Statistics, Idaho State University, Pocatello, ID 83209-8085, USA}
\email{mzafrullah@usa.net}

\thanks{$2010$ Mathematics Subject Classification: 13F05, 13G05, 13B30, 13E99}
 
\thanks{Words and phrases:  defining family of overrings; finite character;  Mori, strong Mori and Krull domain; star and semistar operation; $\vv$-, $\ttt$-, $\ww$-operation; Pr\"ufer and Dedekind domain; Pr\"ufer $\vv$-multiplication domain. }
\thanks{The second named author thanks the ``National Group for Algebraic and Geometric Structures, and their Applications'' (GNSAGA) of the {\sl Istituto Nazionale di Alta Matematica} for a partial support.}

\begin{abstract}
Let $D$ be an integral domain with quotient field $K$. Call an overring $S$ of
$D$ a subring of $K$  containing $D$ as a subring.   A family $\{S_\lambda\mid\lambda \in \Lambda \}$ of overrings of $D$ is called
a defining family of $D$, if $D = \bigcap\{S_\lambda\mid\lambda \in \Lambda \}$. 
Call an overring $S$ a sublocalization of $D$, if $S$ has a defining family consisting of rings of fractions of $D$. 
Sublocalizations and their intersections exhibit interesting examples of semistar or star operations \cite{And}. We show as
a consequence of our work that domains that are locally finite intersections of
Pr\"ufer $\vv$-multiplication  (respectively, Mori) sublocalizations turn out to be Pr\"ufer $\vv$-multiplication domains (respectively, Mori);  in particular, for the Mori domain case, we reobtain  a special case of \cite[Th\'eor\`eme 1]{Q} and \cite[Proposition 3.2]{de}.
We also show
that, more than the finite character of the defining family, it is the finite character
of the star operation induced by the defining   family 
that causes  the interesting results. As a particular case of this theory,  we provide a purely algebraic approach for characterizing Pr\"ufer $\vv$-multiplication domains as a subclass of the class of essential domains (see also \cite[Theorem 2.4]{FT}).
\end{abstract}

\maketitle

\section{Introduction}

Throughout this note $D$ denotes an integral domain and $K$ its quotient field.

A family of overrings (rings between $D$ and $K$) $\{S_\lambda \mid \lambda \in \Lambda \}$ of $D$ such that  $D = \bigcap\{S_\lambda \mid \lambda \in \Lambda \}$ is called a {\it defining family of $D$.}   We say that a {\it defining family} is {\it locally finite} (or, {\it has finite character}) if every nonzero element of $D$ is a unit   in all but a finite number of the $S_\lambda$'s.

 When the
rings $S_\lambda$ are quotient rings of $D$, we get a representation of $D$ as an intersection of
quotient rings. This is the case of an important class of classical domains, e.g., the class of essential domains (definition recalled later), which includes Dedekind domains, Krull domains, Pr\"ufer domains and their generalization Pr\"ufer $\vv$-multiplication domains ({for short P$\vv$MD,  definition recalled later).
A more general inte\-re\-sting representation is when each $ S_\lambda  \in \{S_\lambda \mid \lambda \in \Lambda \}$ is itself an intersection of quotient rings, e.g., if each $ S_\lambda $ is ($\ttt$)-flat over $D$ (definition recalled later).

In this note, we study some of these representations  defined by appropriate finite character type conditions.

The theory of star   and semistar  operations   is one of the key ingredients in achie\-ving  this goal. 
In fact,  any representation associated to a defining family of a domain $D$
induces a star operation on $D$ (see \cite{And}).  More generally,   any intersection of overrings of $D$ defines  a semistar operation on $D$ (see for instance \cite{FH}).   In the present paper,   we will mainly use the last more general setting.

The aim of this paper is to shed new light on some questions in the literature  related to representations of
domains as intersections of quotient rings. 
 For instance, a well known result in this area is  a simple and elegant characterization, in the $\ttt$-finite character  case, of the P$\vv$MDs
 given by M. Griffin   in \cite{Gri 1}: 
  they are exactly the
essential domains.   An example by  W. Heinzer and J. Ohm  \cite{H-O} shows that there exist essential domains that are not P$\vv$MDs.
 The question of when an essential
domain is a P$\vv$MD was solved recently in \cite{FT}, by
using topological methods.
In this paper, we introduce a weak form of the finite character property of a defining family of a domain which turns out to be the key idea for an algebra theoretic proof of the question of when an essential domain is a P$\vv$MD.

 In Sections 2 and 3, we give an overview on the theory of semistar operations
and its interaction with a representation of a domain as an intersection  of overrings.   In Section 4,
we  investigate the question of when an intersection of a family of P$\vv$MDs is a P$\vv$MD. 
In the case of
an  intersection of overrings with finite character,   we give an affirmative answer to the previous question, providing a generalization of a similar well
known fact concerning the Krull domains,   i.e., a locally finite intersection of Krull domains is a Krull domain. 
 In Section 5, we provide a purely algebraic approach for characterizing P$\vv$MDs as a subclass of the class of essential domains.

\bigskip

\section{Preliminaires}

Throughout this paper, let $D$ be an integral domain with quotient field $K$. Let $\FF(D)$ (respectively, $\F(D)$; $\f(D)$) be the set of all nonzero $D$--submodules of $K$ (respectively, nonzero fractional ideals; nonzero finitely
generated fractional ideals) of $D$ (thus, $\f(D)\subseteq\F(D)\subseteq\FF(D)$).

A mapping $\star:\FF(D)\longrightarrow\FF(D)$, $E\mapsto E^\star$, is called a \emph{semistar operation} of $D$ if, for all $z\in K$, $z\neq 0$ and for all $E,F \in\FF(D)$, the following properties hold: $\mathbf{\bf(\star_1)} \;(zE)^\star =zE^\star$; $\mathbf{\bf (\star_2)} \; E\subseteq F \Rightarrow E^\star \subseteq F^\star$; $\mathbf{ \bf (\star_3)}\; E \subseteq E^\star$; and $\mathbf{ \bf (\star_4)}\;  E^{\star \star} := (E^\star)^\star = E^\star $.

When $D^\star=D$,  $\star$  is called a
\emph{(semi)star operation} on $D$; in this case, the restriction of $\star$ to $\boldsymbol{F}(D)$ is a usual
\emph{star operation} (see \cite[Section 32]{Gil} for more details).

As in the classical star-operation setting, we associate to a
semistar operation $\star$ of $D$ a new semistar operation
$\stf$ of $D$ by setting, for every $E\in\FF(D)$,
\begin{equation*}
E^{\stf} := \bigcup \{F^\star \mid  F \subseteq E,  F \in \f(D)\}.
\end{equation*}
We call $\stf$ the semistar operation of finite type of $D$ \emph{associated }to $\star$. If $\star=\stf$, we say that $\star$ is a
 \emph{semistar operation of finite type}  on $D$. Note that $(\stf)_{_{\! f}} = \stf$, so $ \stf$ is a semistar operation
of finite type of $D$.

\smallskip

We denote by $\SStar(D)$ (respectively, $\SStarf(D)$) the set of all semistar ope\-ra\-tions (respectively, semistar operations of finite type) on $D$.
Given two semistar operations $\star'$ and $\star''$ of $D$, we say that $\star' \leq\ \star''$ if $E^{\star'} \subseteq E^{\star''}$, for all $E \in\FF(D)$. The relation ``$\leq$'' introduces a partial ordering in $\SStar(D)$. From the definition of $\stf$, we deduce that $\stf \leq\star$ and that $\stf$ is the largest semistar operation of finite type smaller than or equal to $\star$.

\smallskip

A semistar operation $\star$ defined on an integral domain $D$ is called \emph{stable} provided that, for any $E,H\in \FF(D)$, we have $(E\cap H)^\star=E^\star\cap H^\star$. We denote by $\SStarstab(D)$ the set of stable semistar operations on $D$.

Given a semistar operation $\star$ on $D$, we can always associate to $\star$ a stable semistar operation $\stu$ by defining, for every $E\in\FF(D)$,
\begin{equation*}
E^{\stu}:=\bigcup \{(E:I)\mid I \mbox{ nonzero ideal of } D \mbox{ such that } I^\star =D^\star\}.
\end{equation*}
It is easy to see that $\stu \leq \star$ and, moreover, that $\stu$ is the largest stable semistar operation that precedes $\star$. Therefore, $\star$ is stable if and only if $\star = \stu$ \ \cite[Proposition 3.7, Corollary 3.9]{FH}.

 As in the case of  $\stu$, we can associate to each semistar operation $\star$ a stable semistar operation of finite type $\stt$ by defining, for every $E\in\FF(D)$,
\begin{equation*}
\begin{array}{rl}
E^{\stt}:= & \bigcup \{ (E:J)\mid J \mbox{ nonzero finitely generated ideal of } D \\
& \hskip 70pt \mbox{ such that }J^\star =D^\star\}.
\end{array}
\end{equation*}
The stable semistar operation of finite type $\stt$ is smaller   than or equal to  $\star$, and it is the biggest stable semistar operation of finite type smaller  than or equal to $\star$. It follows that $\star$ is stable of finite type if and only if $\star=\stt$.
We denote by $\SStarstabft(D)$   the set of stable semistar operations of finite type on $D$.

\medskip

Let $\boldsymbol{\mathscr{S}}:= \{S_\lambda \mid \lambda \in \Lambda\}$ be a nonempty family of overrings of an integral domain $D$.
Let $\wedge_{\boldsymbol{\mathscr{S}}}$ be the semistar operation on $D$ defined, for each $E \in \FF(D)$, by:
$$
E^{\wedge_{\boldsymbol{\mathscr{S}}}} := \bigcap \{ES_\lambda \mid \lambda \in \Lambda\}\,.
$$

In particular, if  $S$ be an overring of $D$ and $\boldsymbol{\mathscr{S}} :=\{S\}$, then the operation  $\wS$  is a semistar operation of finite type.
If $S$ is   a $D$-flat overring, then $\wS$ is a semistar operation stable (and of finite type) and conversely (see \cite[Theorem 7.4(i)]{Ma} and \cite[Proposition 1.7]{U}).
In general, for each nonempty family $\boldsymbol{\mathscr{S}}$ of $D$-flat overrings of $D$, $\wedge_{\boldsymbol{\mathscr{S}}}$ is stable, but it is not necessarily of finite type.

\medskip

\smallskip

If $Y$ is a nonempty subset of the prime spectrum $\Spec(D)$ of an integral domain $D$, then we define the semistar operation $\mbox{\texttt{s}}_Y$ \emph{induced} by $Y$ as the semistar operation associated to the set $\boldsymbol{\mathcal{T}}(Y) := \{D_P \mid P \in Y\}$, i.e.,  $\mbox{\texttt{s}}_Y := \wedge _{\boldsymbol{\mathcal{T}}(Y) }$ is the semistar operation defined by
\begin{equation*}
E^{\mbox{\tiny\texttt{s}$_Y$}}:=\bigcap \{ED_{P} \mid P\in Y\},\;  \mbox{ for every } E\in \FF(D).
\end{equation*}
A semistar operation of the type $\mbox{\texttt{s}}_Y$, for some $Y \subseteq \Spec(D)$, is called a \emph{spectral} semistar operation on $D$.
 We denote by $\SStarsp(D)$ the set of spectral semistar operations on $D$. Clearly, a spectral semistar operation is stable, i.e., $\SStarsp(D) \subseteq $ $\SStarstab(D)$. Moreover, it is known that  the previous sets of semistar operations coincide in the finite type case (see for instance \cite[Lemma 1.32]{Pi}):
 $$\SStarsp(D) \cap \SStarf(D)  = \SStarstab(D) \cap \SStarf(D) =\SStarstabft(D)\,.
 $$

\smallskip

For star operations $\ast$,   the notion of a ``star-ideal''  (that is, a nonzero ideal
$I$ of $D$, such that $I^\ast= I$) is very useful.  For a semistar operation $\star$, we need a more general notion,
that coincides with the notion of star-ideal, when $\star$ is a (semi)star operation.
We say
that a nonzero  ideal $I$ of $D$ is a quasi-$\star$-ideal if $I^\star
\cap D = I$. For example,
it is easy to see that, for each nonzero ideal $I$ of $D$ such that $ I^\star \cap D \neq D$, then  $J:= I^\star \cap D$  is a quasi-$\star$-ideal of $D$ that contains $I$;
in particular, a $\star$-ideal (i.e., a nonzero ideal $I$ such that $I^\star = I$) is a quasi-$\star$-ideal. Note that $I^\star \cap D \neq D$
is equivalent to
$I^\star \neq D^\star$.    A  {\it quasi-$\star$-prime} is a  quasi-$\star$-ideal which is also a prime ideal.
We call a {\it quasi-$\star$-maximal} a maximal element in the set of all proper quasi-$\star$-ideals of $D$. We denote
by $\QSpec^\star(D)$ (respectively, $\QMax^\star(D)$) the set of all quasi-$\star$-primes (respectively,
quasi-$\star$-maximals) of $D$.
It is well known that a quasi-$\star$-maximal ideal is a prime ideal and it is possible
to prove that each quasi-$\stf$-ideal is contained in a quasi-$\stf$-maximal ideal (see for instance \cite[Lemma  2.3]{FL-2003}).
When $\star$ is a (semi)star operation, we simply set   $\Max^\star(D)$ (respectively,  $\Spec^\star(D)$)  instead of $\QMax^\star(D)$ (respectively, $\QSpec^\star(D)$).

\smallskip

A semistar operation $\star$ on an integral domain $D$ is said to be an \emph{\texttt{eab} semistar operation} (respectively, an \emph{\texttt{ab} semistar operation}) if, for every $F,G,H\in\f(D)$ (respectively, for every $F\in\f(D)$, $G,H\in\FF(D)$) the inclusion $(FG)^\star\subseteq(FH)^\star$ implies $G^\star\subseteq H^\star$.
Note that, if $\star$ is \texttt{eab}, then $\stf$ is also \texttt{eab}, since $\star$ and $\stf$ agree on nonzero finitely generated fractional ideals.
We can associate to any semistar ope\-ration $\star$ of
$D$  an \texttt{eab} semistar operation of finite type
$  \star_a $  of $ D $, called {\it the \texttt{eab}
semistar
operation associated to $ \star  $},   defined as follows
for each $ F \in \boldsymbol{f}(D)$ and
for each  $E \in {\overline{\boldsymbol{F}}}(D)$:
$$
\begin {array} {rl}
F^{\star_a} :=& \hskip -5pt  \bigcup\{((FH)^\star:H^\star) \; \ | \; \, \; H \in
\boldsymbol{f}(D)\}\,, \\
E^{\star_a} :=& \hskip -5pt  \bigcup\{F^{\star_a} \; | \; \, F \subseteq E\,,\; F \in
\boldsymbol{f}(D)\}\,,
\end{array}
$$
\cite[Definition 4.4 and Proposition 4.5]{FL}. \rm
 The previous
construction, in the ideal systems setting, is essentially due to {P. Jaffard}  \cite{J:1960}
and {F. Halter-Koch} \cite{HK:1998}.

 Obviously
 $(\star_{_{\!f}})_{a}= \star_{a}$. Note also that, when $\star =
\star_{_{\!f}}$, then $\star$ is \texttt{eab} if and only if $\star =
\star_{a}\,$ \cite[Proposition 4.5(5)]{FL}.

\medskip

 A \emph{valuative semistar operation} is a semistar operation  of the type $\wedge_{\boldsymbol{\mathcal W}}$, where $\boldsymbol{\mathcal W}$ is a family of valuation overrings of $D$; it is easy to see that $\wedge_{\boldsymbol{\mathcal W}}$ is an \texttt{eab} semistar operation. In particular, if $\boldsymbol{\mathcal V}$ is the set of all valuation overrings of $D$, the $\mbox{\texttt{b}}$-operation, where $\mbox{\texttt{b}} :=
\wedge_{\boldsymbol{\mathcal V}}$, is an \texttt{eab} semistar operation of finite type on $D$ (see  \cite[pages 394 and 398]{Gil} and \cite[Proposition 4.5]{FiSp}).

Just as in the case of the relation between stable and spectral operations, not every \texttt{eab} semistar operation is valutative, but the two definitions agree on finite type operations (see, for instance, \cite[Corollaries 3.8 and 5.2]{FL}).

Denote by $\SStarval(D)$ (respectively, $\SStareab(D)$;   $\SStareabf(D)$) the set of valutative (respectively, \texttt{eab};  \texttt{eab} of finite type) semistar operations on $D$. By the previous remarks, we have:
$$
\SStareabf(D) :=\SStareab(D) \cap\SStarf(D) = \SStarval(D) \cap\SStarf(D)\,.
$$

\bigskip

\section{Sublocalizations and associated semistar operations}

Let $D$ be an integral domain and $S$ an overring of $D$.
It is possible to define an ``extension'' map $\ext:= \ext(D,S): \SStar(D) \rightarrow \SStar(S)$ (respectively,  ``contraction'' map $\con:= \con(S,D): \SStar(S) \rightarrow \SStar(D)$, by setting $ \star \mapsto \star^{\e}$ (respectively, $ \star \mapsto \star^{\co}$), where:
$$
\begin{array}{rl}
{Ê} & \star^{\e} :    \FF(S)  	\subseteq  \FF(D) \overset{ \star}{\rightarrow} \FF(D) \overset{ \, \otimes_D S}{\longrightarrow} \FF(S),
  \;\; F \mapsto (F^\star)S\,,\\

(\mbox{respectively,}Ê&\star^{\co} : \FF(D) \overset{ \, \otimes_D S}{\longrightarrow} \FF(S) \overset{ \star}{\rightarrow} \FF(S)\subseteq \FF(D),\;\; E \mapsto (ES)^\star ).
\end{array}
$$
Note that $(F^\star)S = F^\star \in \FF(S)$,  since for each nonzero $s \in S$, $sF^\star = (sF)^\star  \subseteq F^\star$, being $F \in \FF(S)$.

We collect in the following lemma some basic properties of the maps $\ext(D,S)$ and $\con(S,D)$  (see also, for instance, \cite[Proposition 1.35, Lemma 1.36, Example 1.37, Proposition 2.11(1), Proposition 2.13(1), Proposition 2.15]{Pi}).

\begin{lemma}

\begin{enumerate}
\item The map $\ext $ is order-preserving, i.e., $\star_1 \leq \star_2$ implies  $(\star_1)^{\e} \leq (\star_2)^{\e}$.
\item The map $\ext $ preserves semistar operations of finite type, i.e., $\ext \mid_{\tiny\SStarf(D)} : \SStarf(D) \rightarrow \SStarf(S)$.
\item The map $\con$ is order-preserving, i.e., $\star_1 \leq \star_2$ implies  $(\star_1)^{\co} \leq (\star_2)^{\co} $.
\item The map $\con$  preserves semistar operations of finite type, i.e., $\con\mid_{\tiny\SStarf(S)}: \SStarf(S) \rightarrow \SStarf(D)$.

\item Let $\dd_D$ (respectively, $\dd_S$) the identity semistar operation on $D$ (respectively, on $S$), then  $(\dd_D)^{\e} = (\wS)^{\e} = \dd_S $.
\item $(\dd_S)^{\co} = \wS  $.

\item For each $\star \in{\mbox{\rm \SStar}}(S)$, $(\star^{\co})^{\e}Ê=  \star $, (i.e., $ \ext  \circ  \con  = \id_{\mbox{\tiny{\rm\SStar}(S)}}$).

\item For each $\star \in{\mbox{\rm \SStar}}(D)$, $ (\star^{\e})^{\co}Ê\geq \star$ (for short, we summarize   this property  by writing $ \con  \circ   \ext  \geq  \id_{\mbox{\tiny{\rm\SStar}(D)}}$). In particular, if $D \subsetneq S$, $\dd_D \lneq ((\dd_D)^{\e})^{\co}= (\dd_S)^{\co}= \wS$.

\end{enumerate}
\end{lemma}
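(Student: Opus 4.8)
The plan is to verify each of the eight items essentially by unwinding the definitions of $\ext$ and $\con$, using the defining formula $F^{\star^{\e}} = (F^\star)S$ for $F \in \FF(S)$ and $E^{\star^{\co}} = (ES)^\star$ for $E \in \FF(D)$, together with the observation already recorded in the excerpt that $(F^\star)S = F^\star$ when $F \in \FF(S)$. For (1) and (3), order-preservation is immediate: if $\star_1 \le \star_2$ then for $F \in \FF(S)$ one has $F^{\star_1} \subseteq F^{\star_2}$, hence $(F^{\star_1})S \subseteq (F^{\star_2})S$, giving $(\star_1)^{\e} \le (\star_2)^{\e}$; symmetrically for $\con$, applying $\star_1 \le \star_2$ to $ES \in \FF(S) \subseteq \FF(D)$. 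For (5), the key point is that $\dd_D$ and $\wS$ both send a finitely generated $F \in \f(D)$ to something whose extension to $S$ is $FS$; more directly, for $F \in \FF(S)$ we have $F^{\dd_D} = F$ and $F^{\wS} = FS = F$, so $(F^{\dd_D})S = F = F^{\dd_S}$ and likewise for $\wS$. For (6), $(\dd_S)^{\co}$ sends $E \in \FF(D)$ to $(ES)^{\dd_S} = ES$, which is exactly $E^{\wS}$.

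For (2) and (4) — preservation of finite type — I would argue as follows. Suppose $\star \in \SStarf(D)$; to show $\star^{\e}$ has finite type, take $F \in \FF(S)$ and let $x \in F^{\star^{\e}} = (F^\star)S$. Then $x = \sum s_i y_i$ with $s_i \in S$ and $y_i \in F^\star$, and since $\star$ is of finite type each $y_i$ lies in $G_i^\star$ for some finitely generated $G_i \in \f(D)$ with $G_i \subseteq F$. Taking $G := \sum G_i \in \f(D)$, $G \subseteq F$, we get all $y_i \in G^\star$, so $x \in (G^\star)S \subseteq (GS)^{\star^{\e}}$; but $GS$ is a finitely generated $S$-submodule of $F$, so this shows $F^{\star^{\e}} \subseteq \bigcup\{H^{\star^{\e}} \mid H \subseteq F,\ H \in \f(S)\}$, i.e., $\star^{\e} = (\star^{\e})_f$. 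The argument for $\con$ is analogous but slightly more delicate: given $\star \in \SStarf(S)$ and $E \in \FF(D)$, an element of $E^{\star^{\co}} = (ES)^\star$ lies in $H^\star$ for some finitely generated $S$-module $H \subseteq ES$; each generator of $H$ is an $S$-combination of finitely many elements of $E$, so $H \subseteq GS$ for some $G \in \f(D)$, $G \subseteq E$, whence the element lies in $(GS)^\star = G^{\star^{\co}}$. This is the step I expect to require the most care, since one must keep straight which ring the "finitely generated" refers to and check that passing from $S$-generators to $D$-generators stays inside $E$.

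For (7), given $\star \in \SStar(S)$ and $F \in \FF(S) \subseteq \FF(D)$, we compute $(\star^{\co})^{\e}$ on $F$: it is $(F^{\star^{\co}})S = ((FS)^\star)S = (F^\star)S = F^\star$, using $FS = F$ (as $F$ is an $S$-module) and again $(F^\star)S = F^\star$; hence $(\star^{\co})^{\e} = \star$. For (8), given $\star \in \SStar(D)$ and $E \in \FF(D)$, we have $E^{(\star^{\e})^{\co}} = (ES)^{\star^{\e}} = ((ES)^\star)S \supseteq (E^\star)S \supseteq E^\star$, where the first inclusion uses $E \subseteq ES$ and $(\star_2)$, giving $\star \le (\star^{\e})^{\co}$. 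The "in particular" clause follows by taking $\star = \dd_D$: then $(\dd_D)^{\e} = \dd_S$ by (5), so $((\dd_D)^{\e})^{\co} = (\dd_S)^{\co} = \wS$ by (6), and when $D \subsetneq S$ this is strictly larger than $\dd_D$ because $D^{\wS} = DS = S \supsetneq D = D^{\dd_D}$. None of these steps presents a genuine obstacle; the whole lemma is a bookkeeping exercise in the definitions, with item (4) being the only place where one must be slightly attentive to the interplay between $S$-finite generation and $D$-finite generation.
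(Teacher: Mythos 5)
Your proof is correct; every item checks out, including the two finite-type preservation arguments (2) and (4), where you correctly handle the passage between $S$-finite generation and $D$-finite generation. The paper itself supplies no proof of this lemma --- it only points to \cite[Proposition 1.35, Lemma 1.36, Example 1.37, Proposition 2.11(1), Proposition 2.13(1), Proposition 2.15]{Pi} --- so your direct unwinding of the definitions of $\ext$ and $\con$ (together with the observation $(F^\star)S=F^\star$ for $F\in\FF(S)$) is exactly the standard verification the authors are implicitly relying on.
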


\begin{remark}
{\rm
In relation with statements (2)  and (4) of the previous lemma, we observe that $\ext $ preserves stable semistar operations  and, if $D^\star = S$, then $\star$ is spectral on $D$ implies that $(\star)^{\e}$ is spectral on $S$ \cite[Proposition 2.11 (2) and (6)]{Pi}.
On the other hand,  $\con$  preserves neither stability nor spectrality. For instance,   $\dd_S$ is obviously spectral and hence stable on $S$ while, if $S$ is not a $D$-flat overring of $D$, $(\dd_S)^{\co} = \wS  $ is not stable (and, a fortiori, is not spectral) on $D$.
}
\end{remark}

The overring $S$ of $D$ is a {\it sublocalization  of } $D$ if $S$ is a nonempty intersection of ring of fractions
of $D$.  Thus $S$ is a sublocalization of $D$ if and only if there exists a nonempty family $ \{T_{\alpha} \mid \alpha \in\mathscr{A} \} $ of
multiplicatively closed subsets of nonzero elements of $D$ such that $S = \bigcap \{D_{T_\alpha} \mid \alpha \in \mathscr{A}  \}$.
 It is
well known that a sublocalization $S$ of $D$ is an intersection of localizations of $D$
at prime ideals, since each ring of fractions of $D$ is  an intersection of localizations of $D$  (see \cite{Gi-He} and \cite{Ri}).  Indeed
If $T$ is a multiplicatively closed subset of an integral domain $D$,  with $0\not\in T$, then
$D_T = \bigcap \{D_P
\mid  P \in \Spec(D) \mbox{ and }  P \cap  T  = \emptyset\}$.
Therefore, if $\{T_{\alpha} \mid \alpha \in\mathscr{A} \} $  is a family of multiplicatively closed sets of nonzero elements of $D$
and $S = \bigcap \{D_{T_\alpha} \mid \alpha \in \mathscr{A}  \}$, then
$S  = \bigcap\{D_P  \mid  P \in \Spec(D), P \cap  T_\alpha  = \emptyset, \mbox{ for some } \alpha \in \mathscr{A} \}$.

From the previous remarks, we deduce immediately:

\begin{lemma} \label{sublocalization}
Let $S$ be an overring of $D$. Then,
$S$ is a sublocalization of $D$ if and only if
$S = \bigcap\{D_P  \mid  P \in \Spec(D), \ S \subseteq D_P \}$.
\end{lemma}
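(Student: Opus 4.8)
The plan is to read off both implications from the two facts recalled immediately before the statement. First, $S$ is a sublocalization of $D$ exactly when $S = \bigcap\{D_{T_\alpha} \mid \alpha \in \mathscr{A}\}$ for some nonempty family $\{T_\alpha \mid \alpha \in \mathscr{A}\}$ of multiplicatively closed subsets of $D \setminus \{0\}$; second, $D_T = \bigcap\{D_P \mid P \in \Spec(D),\ P \cap T = \emptyset\}$ for every such $T$. Putting these together, a sublocalization $S$ is an intersection $S = \bigcap\{D_P \mid P \in \mathcal{P}_1\}$, where $\mathcal{P}_1 := \{P \in \Spec(D) \mid P \cap T_\alpha = \emptyset \text{ for some } \alpha \in \mathscr{A}\}$. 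The content of the lemma is then that this (a priori presentation-dependent) index set may be replaced by the canonical one $\mathcal{P}_2 := \{P \in \Spec(D) \mid S \subseteq D_P\}$.

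For the nontrivial direction, suppose $S$ is a sublocalization and fix a presentation as above; I will show $S = \bigcap\{D_P \mid P \in \mathcal{P}_2\}$. The key observation is $\mathcal{P}_1 \subseteq \mathcal{P}_2$: if $P \cap T_\alpha = \emptyset$ then $T_\alpha \subseteq D \setminus P$, hence $D_{T_\alpha} \subseteq D_P$, and therefore $S \subseteq D_{T_\alpha} \subseteq D_P$. From $\mathcal{P}_1 \subseteq \mathcal{P}_2$ we get $\bigcap\{D_P \mid P \in \mathcal{P}_2\} \subseteq \bigcap\{D_P \mid P \in \mathcal{P}_1\} = S$, while the reverse inclusion $S \subseteq \bigcap\{D_P \mid P \in \mathcal{P}_2\}$ holds by the very definition of $\mathcal{P}_2$. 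Hence the asserted equality.

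Conversely, if $S = \bigcap\{D_P \mid P \in \Spec(D),\ S \subseteq D_P\}$, then $S$ is exhibited as an intersection of localizations of $D$ at prime ideals, each of which is a ring of fractions of $D$, and the index family is nonempty because $(0) \in \Spec(D)$ and $S \subseteq K = D_{(0)}$; so $S$ is a sublocalization by definition. There is no genuine obstacle in this argument: the only point requiring a moment's care is the containment $\mathcal{P}_1 \subseteq \mathcal{P}_2$, which guarantees that enlarging the index family from $\mathcal{P}_1$ to the full canonical family $\mathcal{P}_2$ does not shrink the intersection below $S$, the opposite bound $S \subseteq \bigcap\{D_P \mid P \in \mathcal{P}_2\}$ being trivial.
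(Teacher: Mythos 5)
Your proof is correct and follows exactly the route the paper intends: the paper gives no explicit proof, stating that the lemma follows ``immediately'' from the two preceding remarks (the characterization of sublocalizations via families of multiplicative sets and the identity $D_T=\bigcap\{D_P\mid P\cap T=\emptyset\}$), and your argument simply spells out that deduction, with the containment $\mathcal{P}_1\subseteq\mathcal{P}_2$ being the only point needing care.
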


Recall that,  by \cite[Theorem 1]{Ri},  $S$ is a $D$-flat overring of $D$ if and only if, for each $P \in \Spec(D)$,  either   $PS =S$ or $S \subseteq D_P$. Therefore, by Lemma \ref{sublocalization}, if $S$ is a $D$-flat overring of $D$ then $S$ is a sublocalization of $D$. However, the converse is not true  \cite[Section 2, Discussion 2.1]{HR}.

\begin{proposition}\label{Lemma A}
 Let $D$ be a domain and let $S =\bigcap \{D_{T_\alpha} \mid \alpha \in \mathscr{A}  \}$ be  a sub\-localization of $D$, where $ \{T_{\alpha} \mid \alpha \in\mathscr{A} \} $ is a given family of
multiplicatively closed subsets of nonzero elements of $D$.
Set $\boldsymbol{\mathcal{T}}:=
\boldsymbol{\mathcal{T}}(S) :=
 \{D_{T_{\alpha}} \mid \alpha \in \mathscr{A}  \} $, considered as a family of overrings of $S$,
 let
$\ast:= \wedge_{\boldsymbol{\mathcal{T}}(S) } \in \SStar(S)$ and set $\star_S := \ast^{\co} = \ast \circ \wedge_{\{S\}} \in \SStar(D)$, i.e. $E^{\star_S} :=
(ES)^\ast = \bigcap\{ED_{T_\alpha} \mid \alpha \in \mathscr{A}  \}$, for each $E \in \FF(D)$.
\begin{enumerate}
\item 
 $\star_S   = \wedge_{\boldsymbol{\mathcal{T}}(D)}$, where in the last equality the family $\boldsymbol{\mathcal{T}}(D)$ is the family $\boldsymbol{\mathcal{T}}$ considered as a family of overrings of $D$.
\item  
 $\widetilde{\wedge_{\{S\}}} \leq \wedge_{\{S\}} \leq \star_S 
$. 
\item
Let $  A_{1}, A_{2}, \dots , A_{n}  \in \FF(D)$. Then,
$(A_{1}\cap A_{2}\cap \dots \cap A_{n})^{\star_S} = ((A_{1}\cap A_{2}\cap \dots \cap A_{n})S)^{\ast}=(A_{1}S)^{\ast }\cap (A_{2}S)^{\ast }	\cap \dots \cap (A_{n}S)^{\ast } = A_{1}^{\star_S }\cap A_{2}^{\star_S}\cap \dots \cap A_{n}^{\star_S }$, i.e., $\star_S$ is a stable semistar operation on $D$.

\item If $F\in \f(D)$,  then $(FS)^{-1}=(F^{-1}S)^{\ast } = (F^{-1})^{\star_S}$.

\item Let $\vv(S)$ be the (semi)star $\vv$-operation of $S$ (i.e., $E^{\,\vvs(S)} := (S:(S:E))$ for each $E \in \FF(D)$).
If $F\in \f(D)$, then $
(FS)^{-1}=(F^{-1}S)^{\vvs(S)}.$
\end{enumerate}
\end{proposition}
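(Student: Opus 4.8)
The plan is to deduce (5) from (4) together with the elementary fact that the inverse, formed in $S$, of a fractional ideal of $S$ is always a $\vv(S)$-closed ideal. First I would fix notation: $(FS)^{-1}$ denotes the inverse of the nonzero finitely generated fractional ideal $FS$ of $S$, i.e. $(S:FS)$, whereas $F^{-1}=(D:F)$ since $F\in\f(D)$. I would also record that $\ast=\wedge_{\boldsymbol{\mathcal{T}}(S)}$ is a genuine (semi)star operation on $S$, because $S^\ast=\bigcap\{SD_{T_\alpha}\mid\alpha\in\mathscr{A}\}=\bigcap\{D_{T_\alpha}\mid\alpha\in\mathscr{A}\}=S$ by the very definition of the sublocalization $S$.

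For the inclusion $(FS)^{-1}\subseteq(F^{-1}S)^{\vvs(S)}$ I would first check that $\ast\le\vv(S)$ on $\FF(S)$: if $E\in\FF(S)$ and $y\in(S:E)$, then $yE\subseteq S\subseteq D_{T_\alpha}$ for every $\alpha$, hence $yED_{T_\alpha}\subseteq D_{T_\alpha}$ and so $y\,E^{\ast}=y\bigcap_\alpha ED_{T_\alpha}\subseteq\bigcap_\alpha D_{T_\alpha}=S$; thus $E^{\ast}\cdot(S:E)\subseteq S$, i.e. $E^{\ast}\subseteq(S:(S:E))=E^{\vvs(S)}$. Taking $E=F^{-1}S\in\FF(S)$ and invoking (4), $(FS)^{-1}=(F^{-1}S)^{\ast}\subseteq(F^{-1}S)^{\vvs(S)}$.

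For the reverse inclusion I would use that $(FS)^{-1}=(S:FS)$ is $\vv(S)$-closed: for every $D$-submodule $X$ of $K$ one has $X\subseteq(S:(S:X))$, so applying $(S:-)$ (which reverses inclusions) to $A\subseteq(S:(S:A))$ gives $(S:(S:(S:A)))\subseteq(S:A)$, while the same relation with $X=(S:A)$ gives the opposite inclusion; hence $(S:A)$ is $\vv(S)$-closed, and in particular so is $(S:FS)$. Since moreover $F^{-1}S\cdot FS=\big((D:F)F\big)S\subseteq DS=S$, we get $F^{-1}S\subseteq(S:FS)=(FS)^{-1}$; applying $\vv(S)$ (monotonicity) and using that $(FS)^{-1}$ is $\vv(S)$-closed yields $(F^{-1}S)^{\vvs(S)}\subseteq\big((FS)^{-1}\big)^{\vvs(S)}=(FS)^{-1}$. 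Together with the previous paragraph this gives $(FS)^{-1}=(F^{-1}S)^{\vvs(S)}$.

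I do not expect a real obstacle: the argument is short and essentially formal. The only delicate points are bookkeeping — making sure that each ``inverse'' is read over the correct ring, so that (4) can be quoted verbatim, and verifying $S^{\ast}=S$, without which $\ast$ would fail to be a (semi)star operation on $S$ and the comparison $\ast\le\vv(S)$ could break down.
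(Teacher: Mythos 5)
Your argument for part (5) is correct, and it is essentially the paper's own: both proofs rest on part (4), on the inequality $\ast\le\vv(S)$, and on the fact that $(FS)^{-1}=(S:FS)$ is $\vv(S)$-closed. The paper compresses this into the single chain $((FS)^{-1})^{\vvs(S)}=(FS)^{-1}=(F^{-1}S)^{\ast}=((F^{-1}S)^{\ast})^{\vvs(S)}=(F^{-1}S)^{\vvs(S)}$, quoting references for $\ast\le\vv(S)$ where you verify it by hand; you instead split the equality into two inclusions and obtain the reverse one directly from $F^{-1}S\subseteq(S:FS)$ together with divisoriality of $(S:FS)$. The difference is cosmetic, and your extra checks (that $S^{\ast}=S$, and the explicit verification of $\ast\le\vv(S)$) are sound.

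The genuine problem is one of coverage: the statement is a five-part proposition, and your proposal proves only item (5), explicitly taking item (4) as given. Item (4) is the substantive computational input here, and it in turn depends on the stability assertion (3). The paper proves (4) by writing $F=(f_1,\dots,f_r)$, so that $(FS)^{-1}=\bigcap_i f_i^{-1}S$ and $F^{-1}=\bigcap_i f_i^{-1}D$, and then using the stability of $\star_S$ from (3) to compute $(F^{-1})^{\star_S}=\bigcap_i (f_i^{-1}D)^{\star_S}=\bigcap_i f_i^{-1}S=(FS)^{-1}$; stability of $\star_S$ in (3) follows because $\boldsymbol{\mathcal{T}}(S)$ consists of rings of fractions of $D$. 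Until you supply (1)--(4) (only (1) and (2) are immediate from the definitions), your write-up is not a proof of the stated proposition but only of its last item conditional on the others.
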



\begin{proof}
(1) is a straightforward consequence of the definitions.

(2) In general, for each semistar operation $\star$ on $D$, the stable semistar operation of finite type $\widetilde{\star}$ is such that
$\widetilde{\star} \leq \star $.  
 The second inequality follows by observing that $ES  \subseteq (ES)^{\ast}$, for each $E \in \FF(D)$.

%

 (3)  follows easily from the fact that ${\star_S}$ coincides with $(\wedge_{\boldsymbol{\mathcal{T}}(S)})^{\co}$ and $\boldsymbol{\mathcal{T}}(S)$ is a family of   overrings of fractions of $S$ (and $D$), hence $\ast = \wedge_{\boldsymbol{\mathcal{T}}(S)}$ (respectively,   $(\wedge_{\boldsymbol{\mathcal{T}}(S)})^{\co}$) is a stable semistar operation on $S$ (respectively, on $D$).

(4) Let $F = (f_1, f_2, \dots, f_r)$, then $(FS)^{-1} = (S:FS) = \bigcap\{f_i^{-1}S\mid 1 \leq i \leq r\}$ and $F^{-1} = (D:F) = \bigcap\{f_i^{-1}D\mid 1 \leq i \leq r\}$. Therefore,  $(F^{-1})^{\star_S}=( \bigcap\{f_i^{-1}D\mid 1 \leq i \leq r\})^{\star_S} =
 \bigcap\{(f_i^{-1}D)^{\star_S}\mid 1 \leq i \leq r\}
= \bigcap\{(f_i^{-1}S)^\ast \mid 1 \leq i \leq r\}= \bigcap\{f_i^{-1}S \mid 1 \leq i \leq r\}= (FS)^{-1}$. Thus  $(F^{-1}S)^{\ast } = (F^{-1})^{\star_S}=(FS)^{-1}$.

 (5)  
Since $\ast$ is a (semi)star operation of $S$, it is clear that $\ast \leq \vv(S)$ (see  \cite[Theorem 34.1(4)]{Gil} and \cite[Lemma 1.11]{Pi}). By (4), we have $(FS)^{-1} = (F^{-1}S)^\ast$.
Therefore, $((FS)^{-1})^{\vvs(S)} = (FS)^{-1} =  (F^{-1}S)^\ast =  ((F^{-1}S)^\ast)^{\vvs(S)}= (F^{-1}S)^{\vvs(S)}$.
\end{proof}

\begin{remark}{\rm
As a straightforward consequence of the previous proposition, we re-obtain the following well known  properties.
If $S$ is a $D$-flat overring of $D$, then
\begin{enumerate}
\item for $A_{1}, A_{2}, \dots, A_{n} \in \FF(D)$,
$(A_{1}\cap A_{2}\cap\dots \cap A_{n})S = A_{1}S\cap A_{2}S\cap  \dots \cap A_{n}S$;
\item for each $F\in  \f(D)$,  $ (FS)^{-1} =F^{-1}S $ and  $ (FS)^{\vvs(S)} =(F^{\vvs(D)}S)^{\vvs(S)} $, 
 where $\vv(S)$ (respectively, $\vv(D)$) is  the (semi)star $\vv$-operation of $S$ (respectively, of $D$), for details see  \cite[Proposition 0.6(b)]{FG}.
\end{enumerate}
}
\end{remark}

\smallskip

Let $\star $ be a semistar operation on  the   integral domain $D$.
For $E\in \boldsymbol{\overline{F}}(D)$, we say that $E$ is  \it
$\star $-finite  \rm if there exists a $F\in \boldsymbol{f}(D)$
such that $F^{\star }=E^{\star }$. (Note that in the  above
definition, we do not require that $F\subseteq E$.) It is
immediate to see that if $\star _{1}\leq \star _{2}$ are semistar
operations and $E$ is $\star _{1}$-finite, then $E$ is $\star
_{2}$-finite.
In particular, if $E$ is $\star
_{\!_{f}}$-finite, then it is $ \star $-finite. The converse is
not true in general  (\cite[Remark 2.4]{FP}), and one can prove that $E$ is $\star
_{\!_{f}}$--finite if and only if there exists
 $F\in \boldsymbol{f}(D)$, $F\subseteq E$, such that $F^{\star
}=E^{\star }$ \cite[Lemma 2.3]{FP}. This result was proved in the star
operation setting by M. Zafrullah in \cite[Theorem 1.1]{Zaf 7}.

\begin{lemma}\label{Lemma E} Let $S$ be an overring of $D$ and $\ast$ a semistar operation on $S$.
Consider the semistar operation $\star_S:=\ast^{\co} $ on $D$. Let $I$ be a
nonzero ideal of $D$ and assume that $I^{\star_S} := (IS)^{\ast }=((x_{1}, x_2, \dots ,x_{n})S)^{\ast }$, where
$x_{k}\in IS$, for $1 \leq k \leq n$. Then,  we can find  a finitely generated ideal $J$ of $D$, with $J \subseteq I$, such that,
$$
I^{\star_S}= (IS)^{\ast }=(JS)^{\ast }= J^{\star_S}\,.$$
\end{lemma}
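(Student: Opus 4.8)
The plan is to use the fact that each $x_k$ lies in $IS$, where $S = \bigcap D_{T_\alpha}$ is given only as an abstract overring but with $\ast$ a semistar operation on $S$; however, what we really need is a description of $IS$ that exposes finitely many elements of $I$ whose $S$-extension already ``captures'' $x_1,\dots,x_n$. Since $x_k \in IS$, and $IS = \bigcup\{aS \mid a \in I\}$ is the $S$-submodule of $K$ generated by $I$, each $x_k$ can be written as a finite $S$-linear combination $x_k = \sum_{i} s_{k,i} a_{k,i}$ with $s_{k,i} \in S$ and $a_{k,i} \in I$. Collecting all the $a_{k,i}$ that occur (over all $k$ and $i$) into a finite subset of $I$, and letting $J$ be the ideal of $D$ they generate, we get a finitely generated ideal $J \subseteq I$ with $x_k \in JS$ for every $k$.

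From $J \subseteq I$ we get $JS \subseteq IS$, hence $(JS)^\ast \subseteq (IS)^\ast$ by $(\star_2)$. For the reverse inclusion, note first that $(x_1,\dots,x_n)S \subseteq JS$ since each $x_k \in JS$, so $((x_1,\dots,x_n)S)^\ast \subseteq (JS)^\ast$ by $(\star_2)$. But by hypothesis $(IS)^\ast = ((x_1,\dots,x_n)S)^\ast$, so $(IS)^\ast \subseteq (JS)^\ast$. Combining the two inclusions gives $(IS)^\ast = (JS)^\ast$, that is, $I^{\star_S} = J^{\star_S}$, which is exactly the claim. (Here I am using the definition $E^{\star_S} = (ES)^\ast$ from the statement preceding the lemma, together with the monotonicity axiom $(\star_2)$ for the semistar operation $\ast$ on $S$.)

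The only point requiring a moment's care — and the place I would expect a referee to look closely — is the claim that $IS$ equals the set of finite $S$-linear combinations of elements of $I$, i.e. that the $S$-module generated by $I$ inside $K$ is exactly $\{\sum s_j a_j : s_j \in S, a_j \in I, \text{finite sum}\}$; this is standard, since $IS$ as a subset of $K$ is by definition the additive subgroup generated by all products $sa$ with $s \in S$, $a \in I$, and it is automatically an $S$-module because $S$ is a ring. Once that is granted, the extraction of the finite set $\{a_{k,i}\}$ and the definition of $J$ are immediate, and no finite-type hypothesis on $\ast$ is needed — the finiteness is forced purely by the finite generation of $(x_1,\dots,x_n)$. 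I would write this up in two short paragraphs: one producing $J$, one verifying the two inclusions.
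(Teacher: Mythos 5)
Your proposal is correct and follows essentially the same route as the paper: both write each $x_k\in IS$ as a finite $S$-linear combination of elements of $I$, collect the finitely many $I$-coefficients into a finitely generated ideal $J\subseteq I$ with $(x_1,\dots,x_n)S\subseteq JS\subseteq IS$, and conclude by monotonicity of $\ast$. The paper leaves the final verification as ``straightforward''; you have simply written it out, and your remark that no finite-type hypothesis on $\ast$ is needed is accurate.
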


\begin{proof}
Indeed,  as $x_{k}$ $\in IS$, we have
 $x_{k}=
\sum_{j=1}^{r_{k}}i_{kj}s_{j}$, where $i_{kj}\in I$ and $s_{j}\in S$.
 Then  
$x_{k}S\subseteq I_kS$ for some finitely generated ideal $I_k\subseteq I$ of $D$, for every $k$.  Take $J:=\sum_k I_k$ and the verification of the claim is straightforward.
\end{proof}


\begin{proposition}\label{Proposition G}
Let $\{S_{\lambda}\mid  \lambda \in \Lambda\}$ be a family of overrings of $D$ and let 
$\ast _{\lambda}$ be a (semi)star operation on $S_\lambda$. 
 Set $\star_{S_{\lambda}} := (\ast_\lambda)^{\co}$,
 i.e., $E^{\star_{S_{\lambda}}} :=
(ES_\lambda)^{\ast_{\lambda}}$, for each $E \in \FF(D)$. Consider the semistar operation on $D$, 
$\boldsymbol{\star}:= \bigwedge \star_{S_{\lambda}}: \FF(D) \rightarrow \FF(D)$, defined by $E \mapsto \bigcap \{ES_\lambda \mid  \lambda \in \Lambda  \}$.

Suppose that $D= \bigcap \{S_\lambda \mid \lambda \in \Lambda\}$ is locally finite.  If $I$ is a nonzero ideal of $D$ such that $(IS_{\lambda})^{\ast _{\lambda}} =((x_{\lambda1}, x_{\lambda2}, \dots ,x_{\lambda n_{\lambda}})S_{\lambda})^{\ast _{\lambda}}$
with $x_{\lambda\mu}\in IS_{\lambda}$, for each $\lambda \in \Lambda$ and  $1 \leq \mu \leq n_\lambda$,
 then there
is a finitely generated ideal $J\subseteq I$ in $D$ such that $J^{\boldsymbol{\star}}=I^{\boldsymbol{\star}}$.

\end{proposition}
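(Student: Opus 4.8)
The plan is to reduce the problem to the finitely many $S_\lambda$ for which $I$ fails to extend to the unit ideal, and then apply Lemma \ref{Lemma E} to those finitely many factors. First I would fix a nonzero element $d \in I$. By the locally finite hypothesis, $d$ is a unit in all but finitely many of the $S_\lambda$; say $d$ is a non-unit only in $S_{\lambda_1}, \dots, S_{\lambda_m}$. For every other index $\lambda$, the element $d$ is a unit in $S_\lambda$, so $dS_\lambda = S_\lambda$, and since $d \in I$ we get $IS_\lambda = S_\lambda$, hence $(IS_\lambda)^{\ast_\lambda} = S_\lambda = (dS_\lambda)^{\ast_\lambda}$. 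In other words, for all indices outside the finite set $\{\lambda_1,\dots,\lambda_m\}$ the factor $E S_\lambda$ appearing in $E^{\boldsymbol\star}$ does not ``see'' the difference between $I$ and the principal ideal $(d)$.

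Next, for each of the finitely many exceptional indices $\lambda_i$ ($1 \le i \le m$), I invoke Lemma \ref{Lemma E}: since $(IS_{\lambda_i})^{\ast_{\lambda_i}} = ((x_{\lambda_i 1},\dots,x_{\lambda_i n_{\lambda_i}})S_{\lambda_i})^{\ast_{\lambda_i}}$ with the $x_{\lambda_i \mu} \in I S_{\lambda_i}$, the lemma produces a finitely generated ideal $J_i \subseteq I$ of $D$ with $(IS_{\lambda_i})^{\ast_{\lambda_i}} = (J_i S_{\lambda_i})^{\ast_{\lambda_i}}$, i.e. $I^{\star_{S_{\lambda_i}}} = J_i^{\star_{S_{\lambda_i}}}$. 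Now set
$$
J := (d) + J_1 + J_2 + \dots + J_m,
$$
a finitely generated ideal of $D$ contained in $I$ (since $d \in I$ and each $J_i \subseteq I$).

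It remains to check $J^{\boldsymbol\star} = I^{\boldsymbol\star}$, i.e. $\bigcap_\lambda J S_\lambda = \bigcap_\lambda I S_\lambda$. The inclusion $\subseteq$ is clear from $J \subseteq I$. For the reverse, I argue factor by factor: for each $\lambda$ I claim $(IS_\lambda)^{\ast_\lambda} = (JS_\lambda)^{\ast_\lambda}$, which suffices because $\boldsymbol\star = \bigwedge \star_{S_\lambda}$ and each $\star_{S_\lambda}$ agrees with $(\ast_\lambda)^{\co}$. If $\lambda \notin \{\lambda_1,\dots,\lambda_m\}$, then as observed above $(IS_\lambda)^{\ast_\lambda} = S_\lambda$, and since $d \in J$ is a unit in $S_\lambda$ also $(JS_\lambda)^{\ast_\lambda} = S_\lambda$, so they agree. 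If $\lambda = \lambda_i$, then $J_i \subseteq J \subseteq I$ gives $(J_i S_{\lambda_i})^{\ast_{\lambda_i}} \subseteq (J S_{\lambda_i})^{\ast_{\lambda_i}} \subseteq (I S_{\lambda_i})^{\ast_{\lambda_i}}$, and the two ends are equal by the choice of $J_i$, forcing equality throughout. Intersecting over all $\lambda$ then yields $J^{\boldsymbol\star} = I^{\boldsymbol\star}$, as desired.

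The only genuinely delicate point is the bookkeeping: one must be sure that the finite exceptional set depends only on the single chosen element $d$ (this is exactly what ``locally finite'' buys us), and that Lemma \ref{Lemma E} is applied with the hypotheses in the right form for each $\lambda_i$ — the lemma is stated for a single overring and a single semistar operation, so it applies verbatim to each $\star_{S_{\lambda_i}} = (\ast_{\lambda_i})^{\co}$. No obstacle beyond this; the argument is essentially a ``principal ideal plus finitely many corrections'' construction, and stability of $\boldsymbol\star$ is not even needed, only that it is defined factorwise by the extensions $E \mapsto ES_\lambda$.
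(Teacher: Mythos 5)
Your proof is correct and follows essentially the same route as the paper's: pick a nonzero element of $I$, use local finiteness to isolate finitely many exceptional overrings, apply Lemma~\ref{Lemma E} there, and take $J$ to be the principal ideal plus the finitely many corrections. The only (harmless) difference is that you apply Lemma~\ref{Lemma E} uniformly to every index where $d$ is a non-unit, whereas the paper separately handles the indices with $IS_\lambda=S_\lambda$ by adjoining finitely generated ideals $I_h$ with $I_hS_h=S_h$; your version is a slight streamlining of the same bookkeeping.
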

\begin{proof}

Since $D= \bigcap \{S_\lambda \mid \lambda \in \Lambda\}$ is locally finite, we have $
IS_{k}\neq S_{k}$ for at most a finite subset $\{ S_k \mid  1 \leq k\leq n \}$.
Now, take  a nonzero element $j\in I$, for the same reason,  $j$ is a nonunit in  only finitely many  overrings $S_\lambda$ and, by the previous considerations, we can assume that $j$ is a nonunit in
$\{S_{1},S_{2}, \dots, S_{n}, S_{n+1}, \dots, S_m \mid  \mbox{ for some } m \geq n \}$.

If $m=n$, then $jD \subseteq I$ is such that $jS_{k}\neq S_{k},$ precisely for $1 \leq k\leq n$.

 If $m \gneq n$, since $IS_{h}=S_{h}$ for each $n+1 \leq h \leq m$, there exist a finitely generated ideal $I_h \subseteq I$   such that $I_hS_{h}=S_{h}$.
Thus, the finitely generated ideal $J_0:=jD+\sum_h I_h\subseteq I$ ensures
that $J_0S_{k}\neq S_{k},$ precisely for $1 \leq k\leq n$.

From Lemma \ref{Lemma E}, for each $\lambda \in \Lambda$, we know that $ I^{\star_{S_\lambda}}= (IS_\lambda)^{\ast_\lambda }=(J_\lambda S_\lambda)^{\ast_\lambda }=(J_\lambda)^{\star_{S_\lambda}}$, for some finitely generated ideal $J_\lambda \subseteq I$ of $D$.
In particular, if we consider the finite subset $\{ S_k \mid  1 \leq k\leq n \}$ of $ \{ S_\lambda \mid  \lambda \in \Lambda \}$, then we can find a finite set of finitely generated ideals $\{ J_k \mid  1 \leq k\leq n \}$  contained in $I$ such that
 $(IS_k)^{\ast_k }=(J_k S_k)^{\ast_k}$, where ${\ast_k} := {\ast_{\lambda_k}}$, for $ 1 \leq k\leq n$.
 Set $J:= J_0 + J_1+\dots + J_n$, by construction, it is easy to see that $J$ is finitely generated ideal of $D$ contained in $I$.

Therefore,  $(IS_\lambda)^{\ast_\lambda} =  S_\lambda =  (JS_\lambda)^{\ast_\lambda} $, for each $\lambda \in \Lambda \setminus \{1, 2, \dots, n\}$ (since, in the present situation, $J_0S_\lambda =JS_\lambda =IS_\lambda =  S_\lambda $).
 For $k\in \{1, 2, \dots,n\}$, we
have $(IS_{k})^{\ast _{k}}=(J_kS_{k})^{\ast
_{k}}\subseteq (JS_{k})^{\ast _{k}}$.
Thus, for all $\lambda\in \Lambda$, we have $(IS_{\lambda})^{\ast _{\lambda}}\subseteq
(JS_{\lambda})^{\ast _{\lambda}}$ and so we conclude that  $I^{\boldsymbol{\star} }\subseteq J^{\boldsymbol{\star} }$.
 The opposite inclusion is trivial, since $J \subseteq I$.
\end{proof}

\medskip


\section{Sublocalizations and Pr\"ufer $\mbox{$\vv$}$-multiplication domains}

Let $\star$ be a semistar operation on an integral domain $D$.
For   a nonzero ideal $I$ of $D$, we say that $I$ is \it{$\star
$--invertible} \rm if $(II^{-1})^{\star }=D^{\star }$. From the
fact that $\QMax^{\widetilde{ \star }}(D)=\QMax^{\star
_{\!_{f}}}(D)$, it easily follows that an ideal $I$ is
$\widetilde{\star }$--invertible if and only if $I$ is $\star
_{_{\!f}}$--invertible (note that if $\boldsymbol{\star} $ is a semistar
operation of finite type, then $(II^{-1})^{\boldsymbol{\star} }=D^{\boldsymbol{\star}  }$ if
and only if $ II^{-1}\not\subseteq M$ for all $M\in
\text{QMax}^{\boldsymbol{\star} }(D)$). It is   well known   that if $I$ is $\star
_{_{\!f}}$--invertible, then $I$ and $I^{-1}$ are both $\star
_{_{\!f}}$--finite \cite[Proposition 2.6]{FP}.

\medskip

An integral domain $D$ is called a \it  Pr\"{u}fer $\star
$--multiplication domain  \rm (for short,  \it P$\star $MD\rm) if
every nonzero finitely generated ideal of  $D$ is $\star
_{_{\!f}}$--invertible (cf. for instance \cite{FJS}). Note that
    for $\star = \ast$   a   star operation of finite type on $D$,
P$\ast$MD's were intro\-duced by Houston, Malik, and Mott in
\cite{[HMM]} as $\ast$--multiplication domains.  When $\star = \vv$, we have the classical notion
of P$\vv$MD (cf. \!for in\-stance \cite{Gri 1}, \cite{MZ} and \cite{Kan});
when $\star =\dd$, where $\dd$ denotes the identity (semi)star
operation, we have the notion of Pr\"{u}fer domain \cite[Theorem
22.1]{Gil}.  For star operations $\ast$, the only P$\ast$MDs are the P$v$MDs and the Pr\"ufer domains since in
a P$\ast$MD, $\ast_f=t$   (see \cite[Theorem 3.5]{Kan} and \cite[Proposition 3.4]{FJS}). 

  Note that from the definition and from the previous
observations, it immediately follows that the notions of P$\star
$MD, P$\star _{_{\!f}}$MD,  and P$ \widetilde{\star }$MD coincide.
\medskip

As in the star case \cite[Corollary 2.10]{And-Cook}, it is  well known that, for each semistar operation $\star$, we have $\stt = \wedge_{\QMax^{\stf}(D)}$, i.e., for each $E \in \FF(D)$,
$$
E^{\stt} = \bigcap \{ ED_Q \mid Q \in\QMax^{\stf}(D) \}\,.
$$
From this fact, it can be deduced that  $D$ is a  P$\star
$MD if and only if $D_Q$ is a valuation domain for each $Q \in \QMax^{\stf}(D)$ \cite[Theorem 3.1]{FJS}.

Recall that an {\it essential valuation overring} $V$ of an integral domain $D$ is a valuation overring of $D$ such that $V = D_P$ for some $P \in \Spec(D)$; in this situation, $P$ is called {\it essential prime}.
 A family of overrings $ \{S_\lambda \mid \lambda\in \Lambda  \}$ of $D$ is said   an {\it essential representation} (or, an {\it  essential defining family}) {\it of}
$D$, if $D = \bigcap \{S_\lambda \mid \lambda\in \Lambda  \}$
and each $S_\lambda$  is an essential valuation overring of $D$. An {\it essential domain} is an integral domain having an essential representation.
A P$\vv$MD is always essential because $D_Q$ is a valuation domain
for each $Q \in\QMax^{\ttts}(D)$ \cite[Theorem 3.2]{Kan}.

\begin{theorem}\label{Theorem H}
Let $\{S_{\lambda}\mid  \lambda \in \Lambda\}$ be a family of sublocalizations
of an integral domain $D$.
Suppose that
 $D=\bigcap \{S_{\lambda}\mid  \lambda \in \Lambda\}$ where the intersection is
locally finite.
 \begin{enumerate}
 \item[\bf(1)]
Let $\boldsymbol{\mathcal{T}}_{\!\lambda}:=
\boldsymbol{\mathcal{T}}(S_\lambda):= \{D_{T_{\alpha_{\lambda}}} \mid \alpha_{\lambda}\in \mathscr{A}_{\lambda}\}$ be a defining family of $S_{\lambda}$ and let $
\ast _{\lambda}$ be the (semi)star operation on $S_\lambda$ induced  by $\boldsymbol{\mathcal{T}}_\lambda$, i.e.,
$
\ast _{\lambda} := \wedge_{\boldsymbol{\mathcal{T}}_{\lambda}}$.
As in Proposition \ref{Proposition G}, set $\boldsymbol{\star}:= \bigwedge \{(\ast_\lambda)^{\co} \mid \lambda \in \Lambda\}$.\\
Assume that, for each $\lambda \in \Lambda$,
 \begin{enumerate}
\item[(a)]
 $\ast _{\lambda}$ is a (semi)star operation of finite type of $S_\lambda$,  \; \; and
\item[(b)] $S_{\lambda}$ is a P\! $\ast _{\lambda}$MD, 
\end{enumerate}
 then $D
$ is a P$\boldsymbol{\star}$MD, and so $D$ is a P\! $\vv$MD.

 \item[\bf(2)] Assume that each of $S_{\lambda}$ is a P\! $\vv$MD, 
then $D
$ is a P\! $\vv$MD.
\end{enumerate}
\end{theorem}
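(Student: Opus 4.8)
The plan is to deduce both statements from the finite-character device of Proposition~\ref{Proposition G}, getting (2) out of (1) by a suitable choice of the operations $\ast_\lambda$.

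\emph{For (1)}, I would fix a nonzero $F\in\f(D)$ and aim to show it is $\boldsymbol{\star}_f$--invertible (enough, since ``P$\boldsymbol{\star}$MD'' means ``P$\boldsymbol{\star}_f$MD''). Set $I:=FF^{-1}$. For each $\lambda$, $FS_\lambda$ is finitely generated, hence $\ast_\lambda$--invertible by (a)--(b); since $\ast_\lambda$ is of finite type, \cite[Proposition~2.6]{FP} makes $(FS_\lambda)^{-1}$ be $\ast_\lambda$--finite, and Proposition~\ref{Lemma A}(4) identifies $(FS_\lambda)^{-1}$ with $(F^{-1}S_\lambda)^{\ast_\lambda}$, so by \cite[Lemma~2.3]{FP} I can pick a finitely generated $H_\lambda\subseteq F^{-1}S_\lambda$ with $H_\lambda^{\ast_\lambda}=(FS_\lambda)^{-1}$. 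Then
\begin{align*}
(IS_\lambda)^{\ast_\lambda}
&=\big(F\,(F^{-1}S_\lambda)\big)^{\ast_\lambda}
=\big(F\,(FS_\lambda)^{-1}\big)^{\ast_\lambda}\\
&=\big((FS_\lambda)(FS_\lambda)^{-1}\big)^{\ast_\lambda}
=S_\lambda
=(F H_\lambda)^{\ast_\lambda},
\end{align*}
and $F H_\lambda$ is a finitely generated $S_\lambda$--submodule of $K$ all of whose generators lie in $F\,(F^{-1}S_\lambda)=IS_\lambda$. So $I=FF^{-1}$ meets the hypothesis of Proposition~\ref{Proposition G}, giving a finitely generated $J\subseteq FF^{-1}$ with $J^{\boldsymbol{\star}}=(FF^{-1})^{\boldsymbol{\star}}=\bigcap_\lambda(FF^{-1}S_\lambda)^{\ast_\lambda}=\bigcap_\lambda S_\lambda=D=D^{\boldsymbol{\star}}$. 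Since $J$ is finitely generated, $J^{\boldsymbol{\star}_f}=J^{\boldsymbol{\star}}=D=D^{\boldsymbol{\star}_f}$, whence $(FF^{-1})^{\boldsymbol{\star}_f}=D^{\boldsymbol{\star}_f}$: $F$ is $\boldsymbol{\star}_f$--invertible. This gives that $D$ is a P$\boldsymbol{\star}$MD; and since $\boldsymbol{\star}$ is a (semi)star operation on $D$ (as $D^{\boldsymbol{\star}}=\bigcap_\lambda S_\lambda=D$), one has $\boldsymbol{\star}_f\le\ttt$ on nonzero ideals (for $F\in\f(D)$, $F^{\boldsymbol{\star}_f}=F^{\boldsymbol{\star}}\subseteq F^{\vvs(D)}=F^{\ttts(D)}$), so the $\boldsymbol{\star}_f$--invertibility of finitely generated ideals forces $\ttt$--invertibility, i.e. $D$ is a P$\vv$MD.

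\emph{For (2)}, I would reduce to (1) by constructing, for each $\lambda$, a defining family $\boldsymbol{\mathcal{T}}_\lambda$ of $S_\lambda$ by rings of fractions of $D$ with $\ast_\lambda:=\wedge_{\boldsymbol{\mathcal{T}}_\lambda}$ of finite type and $S_\lambda$ a P$\ast_\lambda$MD. As $S_\lambda$ is a sublocalization of $D$, Lemma~\ref{sublocalization} gives $S_\lambda=\bigcap\{D_P\mid P\in\Spec(D),\ S_\lambda\subseteq D_P\}$; for each such $P$ one has $D_P=(S_\lambda)_{P'}$ with $P':=PD_P\cap S_\lambda$, so $D_P$ is a localization of the P$\vv$MD $S_\lambda$ --- thus itself a P$\vv$MD --- and it is a $D$--flat overring of $D$. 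Writing $D_P=\bigcap\{(D_P)_N\mid N\in\QMax^{\ttts}(D_P)\}$ with each $(D_P)_N$ a valuation domain and, by flatness, $(D_P)_N=D_{N\cap D}$, one obtains $S_\lambda=\bigcap\boldsymbol{\mathcal{V}}_\lambda$ for the resulting family $\boldsymbol{\mathcal{V}}_\lambda$ of essential valuation overrings $D_{N\cap D}$ of $D$; each such $D_{N\cap D}$, being a ring of fractions of $D$ containing $S_\lambda$, equals $(S_\lambda)_{Q_0}$ for some prime $Q_0$ of $S_\lambda$. Then I would take $\ast_\lambda:=\widetilde{\wedge_{\boldsymbol{\mathcal{V}}_\lambda}}$ --- a stable, hence spectral, semistar operation of finite type on $S_\lambda$ with $S_\lambda^{\ast_\lambda}=S_\lambda$ --- and $\boldsymbol{\mathcal{T}}_\lambda:=\{(S_\lambda)_Q\mid Q\in\QMax^{\ast_\lambda}(S_\lambda)\}$ its canonical defining family; each $(S_\lambda)_Q$ in it is a localization of a member of $\boldsymbol{\mathcal{V}}_\lambda$, hence a localization of a localization of $D$ (so a ring of fractions of $D$) and a localization of a valuation domain (so a valuation domain). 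Thus $\ast_\lambda=\wedge_{\boldsymbol{\mathcal{T}}_\lambda}$ is of finite type and given by rings of fractions of $D$, $S_\lambda$ is a P$\ast_\lambda$MD, and (1) applies.

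\emph{The main obstacle} is the control of finite type: a priori $\boldsymbol{\star}=\bigwedge(\ast_\lambda)^{\co}$ is an infinite meet of finite--type operations and so need not be of finite type, so $(FF^{-1})^{\boldsymbol{\star}}=D$ does not by itself yield $(FF^{-1})^{\boldsymbol{\star}_f}=D$; it is exactly the local finiteness of $D=\bigcap S_\lambda$, via Proposition~\ref{Proposition G}, that produces a single finitely generated $J\subseteq FF^{-1}$ witnessing the invertibility at finite type. The other delicate point --- only in part (2) --- is to make sure the defining family of each $S_\lambda$ actually consists of rings of fractions of $D$; this rests on the flatness over $D$ of each $D_P$ with $S_\lambda\subseteq D_P$, which keeps its further localizations rings of fractions of $D$ and lets the P$\vv$MD structure of $S_\lambda$ be carried by genuine localizations of $D$.
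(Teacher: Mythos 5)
Your proof of part~(1) is correct but takes a genuinely different route from the paper's. The paper shows that $\boldsymbol{\star}$ is stable (because the $\boldsymbol{\mathcal{T}}_\lambda$ consist of rings of fractions of $D$), valuative (because each $\ast_\lambda$, being of finite type and \texttt{eab}, is valuative) and of finite type (by local finiteness), and then invokes the characterization of P$\star$MDs as the domains for which $\widetilde{\star}$ is \texttt{eab} \cite[Theorem 3.1]{FJS}. You instead verify the definition directly, transporting the $\ast_\lambda$-invertibility of each $FS_\lambda$ to the $\boldsymbol{\star}_{\!f}$-invertibility of $F$ by feeding $I=FF^{-1}$ into Proposition~\ref{Proposition G}; this is more self-contained and makes visible exactly where local finiteness enters. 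One small repair: \cite[Lemma 2.3]{FP} gives a finitely generated $H_\lambda$ inside $(FS_\lambda)^{-1}$, not inside $F^{-1}S_\lambda$; to place it inside $F^{-1}S_\lambda$ (which you need so that the generators of $FH_\lambda$ lie in $IS_\lambda$) use that $\ast_\lambda$ is of finite type together with $(F^{-1}S_\lambda)^{\ast_\lambda}=(FS_\lambda)^{-1}$ from Proposition~\ref{Lemma A}(4), so each generator of that $H_\lambda$ already lies in $G^{\ast_\lambda}$ for some finitely generated $G\subseteq F^{-1}S_\lambda$. This is routine.

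Part~(2) has a genuine gap. Your reduction is in substance the paper's: since every center of a member of $\boldsymbol{\mathcal{V}}_\lambda$ is an essential, hence $\ttt$-, prime of $S_\lambda$, the operations $\wedge_{\boldsymbol{\mathcal{V}}_\lambda}$ and $\vv(S_\lambda)$ trivialize the same finitely generated ideals, so $\ast_\lambda=\widetilde{\wedge_{\boldsymbol{\mathcal{V}}_\lambda}}=\ww_{S_\lambda}$ and your $\boldsymbol{\mathcal{T}}_\lambda$ is exactly the paper's family $\{(S_\lambda)_{\mathfrak{q}}\mid \mathfrak{q}\in\Max^{\ttts}(S_\lambda)\}$. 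The unproved step is the assertion that each $(S_\lambda)_Q$ with $Q\in\QMax^{\ast_\lambda}(S_\lambda)$ is a localization of a member of $\boldsymbol{\mathcal{V}}_\lambda$. That amounts to saying that every quasi-$\ast_\lambda$-maximal ideal of $S_\lambda$ is contained in the center of some member of $\boldsymbol{\mathcal{V}}_\lambda$; what the construction actually gives is only that every \emph{finitely generated} subideal of such a $Q$ lies in some center, with the witnessing center allowed to vary with the subideal. This is a quasi-compactness statement about the set of centers and does not come for free. Without it you cannot conclude that the members of $\boldsymbol{\mathcal{T}}_\lambda$ are rings of fractions of $D$ --- which is exactly what the hypotheses of (1) (and, in your own proof of (1), Lemma~\ref{Lemma A}(4) and the stability underlying Proposition~\ref{Proposition G}) require. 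To be fair, the paper's proof of (2) passes over the same point with ``it is easy to see that the assumptions of (1) are satisfied,'' and the Remark that follows gives a complete argument only under the additional hypothesis that each $S_\lambda$ is $\ttt$-flat over $D$, in which case $(S_\lambda)_{\mathfrak{q}}=D_{\mathfrak{q}\cap D}$ by definition. To close your argument you would need to show that, for a sublocalization $S_\lambda$ of $D$ which is a P$\vv$MD, one has $S_\lambda\subseteq D_{\mathfrak{q}\cap D}$ for every $\mathfrak{q}\in\Max^{\ttts}(S_\lambda)$; this is not automatic and is the real content of the step.
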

\begin{proof}
(1)
Recall that, given a semistar operation $\star$ on an integral domain $D$, $D$ is a P$\star$MD if and only if $\widetilde{\star} $ is a \texttt{eab} semistar operation, i.e., $\widetilde{\star} =\sta$ \cite[Theorem 3.1]{FJS}.

We start by observing that, in the present situation,  $\ast _{\lambda}$ is a stable (semi)star operation, because the family $\boldsymbol{\mathcal{T}}_\lambda$ consists of  rings of fractions. 
Since we are assuming that $\ast _{\lambda}$ is 
of finite type of $S_\lambda$ and  $S_{\lambda}$ is a P$\ast _{\lambda}$MD, we have 
 $\ast _{\lambda} = \widetilde{\ast _{\lambda}} = (\ast _{\lambda})_a$ \cite[Theorem 3.1]{FJS}.

Moreover, $\boldsymbol{\star} = \bigwedge \{ (\ast _{\lambda})^{c_\lambda}\mid \lambda \in \Lambda \}$  is a (semi)star operation of finite type of $D$, since the intersection $D=\bigcap \{S_{\lambda}\mid  \lambda \in \Lambda\}$   is
locally finite and each $\ast _{\lambda}$ is of finite type (see \cite[Theorem 2(4)]{And} and \cite[Corollary 2.9]{FiSp}).
Clearly,  $\boldsymbol{\star}$ is a stable  and valuative (semi)star operation on $D$,
because, in the present setting,  each $(\ast _{\lambda})^{c_\lambda} \ (= \con(D, S_\lambda)(\ast _{\lambda}))$ is a stable (since it is induced by a family of rings of fractions of $D$)
and valuative (semi)star operation on  $D$ (since  $\ast _{\lambda}$ is valutative and the valuation overrings of  $S_{\lambda}$  are valuation overrings of $D$).
We conclude that  $\widetilde{\boldsymbol{\star}} = \boldsymbol{\star}$ is   an \texttt{eab} (semi)star operation and so $D$ is a P${\boldsymbol{\star}}$MD. Since ${\boldsymbol{\star}} \leq \vv$, then  $D$ is  also a P$\vv$MD.

 (2)  For each  $\lambda \in \Lambda$, we take as defining family of $S_{\lambda}$ the family of valuation overrings  $\boldsymbol{\mathcal{T}}_\lambda:=
\boldsymbol{\mathcal{T}}(S_\lambda):=
 \{(S_\lambda)_{\mathfrak{q}_{\lambda}} \mid \mathfrak{q}_{\lambda}\in \mathscr{A}_{\lambda} := \Max^{\ttts}(S_{\lambda})\}$.
 In the present situation, the (semi)star operation on $S_\lambda$ associated to  $\boldsymbol{\mathcal{T}}_\lambda$, i.e., $\ast_\lambda = \wedge_{\boldsymbol{\mathcal{T}}_\lambda}$, coincides with $\ww_\lambda$, that is the $\ww$-operation on
 $S_\lambda$. It is easy to see that the assumptions of (1) are satisfied  (after recalling that a P$\vv$MD coincides with a P$\ww$MD) and so, if we denote by $\boldsymbol{\star}$ the (semi)star operation
 $\bigwedge \{ (\ww_\lambda)^{c_\lambda} \mid \lambda \in \Lambda\}$, we can conclude by (1) that $D$ is a P$\boldsymbol{\star}$MD. In particular, since $\boldsymbol{\star} \leq \vv$, $D$ is a P$\vv$MD.
 %
 %
%

\end{proof}

Recall that an overring $S$ of an integral domain $D$ is a {\it $\ttt$-flat overring of $D$} if, for
each maximal $\ttt$-ideal $M$ of $S$,   $S_{M}=D_{(M\cap D)}$ \cite{KP}.

\begin{remark}
{\rm 
Note that it is possible to give a direct and independent proof of Theorem \ref {Theorem H}(2) under the assumptions that 
 $D=\bigcap \{S_{\lambda}\mid  \lambda \in \Lambda\}$, the intersection is
locally finite and each $S_{\lambda}$ is $t$-flat.

By assumption,
$$D=\bigcap \{S_{\lambda}\mid  \lambda \in \Lambda\}
= \bigcap \left\{ \bigcap \{(S_{\lambda})_{\mathfrak{q}_{\lambda}}\mid {\mathfrak{q}_{\lambda}} \in \Max^{\ttts}(S_{\lambda}\} \mid  \lambda \in \Lambda\right\} $$
and  the valuation overring $(S_{\lambda})_{\mathfrak{q}_{\lambda}}$  is essential for $D$, for each $\lambda \in \Lambda$ and for each ${\mathfrak{q}_{\lambda}} \in \Max^{\ttts}(S_{\lambda})$.
Now, by Lemma 8 of \cite{Zaf 2}, an essential domain  $D$ is
a P$\vv$MD if and only if, for every pair of elements $a,b\in D\backslash \{0\}$, the ideal $
aD\cap bD$ is a $\vv$-ideal of finite type.

As each $S_{\lambda}$ is a P$\vv$MD,  $aS_{\lambda}\cap
bS_{\lambda}$  is a $\vv_{\lambda}$-ideal of finite type, thus we can find $x_{\lambda1}, x_{\lambda 2},\dots ,x_{\lambda n_{\lambda}} \in S_\lambda$ such that
$$
aS_{\lambda}\cap bS_{\lambda}  =
(x_{\lambda1}, x_{\lambda 2},\dots , x_{\lambda n_{\lambda}})^{\vvs_{\lambda}} =
(x_{\lambda1}, x_{\lambda 2},\dots , x_{\lambda n_{\lambda}})^{\ttts_{\lambda}} =
(x_{\lambda1}, x_{\lambda 2},\dots , x_{\lambda n_{\lambda}})^{\wws_{\lambda}}
$$
where $\vv_{\lambda}$, $\ttt_{\lambda}$, and $\ww_{\lambda}$  are the $\vv$-, the $\ttt$-, and $\ww$-operation on  the  $S_{\lambda}$'s, and we already observed that the $\ttt$-, and $\ww$-operation coincide on the P$\vv$MD $S_\lambda$ \cite[Theorem 3.5]{Kan} (via \cite[Theorem 4.7]{Zaf 10}).

On the other hand, by
Proposition \ref{Lemma A}(3), $((aD\cap bD)S_{\lambda})^{\wws_{\lambda}}=
(aS_{\lambda})^{\wws_{\lambda}} \cap (bS_{\lambda})^{\wws_{\lambda}} =aS_{\lambda}\cap bS_{\lambda}$, for each $\lambda$.
Therefore,
$$(aS_{\lambda}\cap bS_{\lambda})^{\wws_{\lambda}} = (x_{\lambda1}, x_{\lambda 2},\dots , x_{\lambda n_{\lambda}})^{\wws_{\lambda}}
$$
and, necessarily, $x_{\lambda k}\in aS_{\lambda}\cap bS_{\lambda}$, for  $1 \leq k \leq n_\lambda$.
 Let $I := aD\cap bD$,  by Lemma \ref{Lemma E}, we can find  a finitely generated ideal $J_\lambda:= (j_{\lambda 1}, j_{\lambda 2}, \dots, j_{\lambda r_\lambda})D$,   with $J_\lambda \subseteq I$, such that,
$$
 (IS_{\lambda})^{\wws_{\lambda} }=((j_{\lambda 1}, j_{\lambda 2}, \dots, j_{\lambda r_\lambda})S_{\lambda})^{\wws_{\lambda} }\, .$$
Next, as $D=\bigcap \{S_{\lambda}\mid  \lambda \in \Lambda\}$  is of finite character
then, in particular, $\boldsymbol{\star} $ is a (semi)star operation on $D$.  Moreover, by
 Proposition \ref{Proposition G},  there exists a finitely generated ideal $J$ of $D$ , with $J \subseteq I$, such that
$J^{\boldsymbol{\star}} = I^{\boldsymbol{\star}}$. Since $\boldsymbol{\star} \leq \vv$,
 then $J^{\vvs}= (J^{\boldsymbol{\star}})^{\vvs} =(I^{\boldsymbol{\star}})^{\vvs}= I^{\vvs} = aD \cap bD$.
 }
\end{remark}

From the previous Theorem \ref{Theorem H}, we deduce immediately  the following two corollaries.

\begin{corollary}\label{Corollary L}
Let $\{S_{\lambda}\mid  \lambda \in \Lambda\}$ be a family of essential valuation
overrings of $D$ such that $D=\bigcap \{S_{\lambda}\mid  \lambda \in \Lambda\}$, where the
intersection is locally finite. Then $D$ is a P \!$\vv$MD.
\end{corollary}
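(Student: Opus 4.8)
The plan is to read this off directly from Theorem \ref{Theorem H}(2). The only points that need to be checked are that the family $\{S_\lambda\mid\lambda\in\Lambda\}$ fits the standing hypotheses of that theorem, and that each $S_\lambda$ satisfies the additional hypothesis of part (2); everything else is then done for us.

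First I would observe that an essential valuation overring is in particular a ring of fractions of $D$: by definition $S_\lambda=D_{P_\lambda}$ for some $P_\lambda\in\Spec(D)$. Hence $S_\lambda$ is a sublocalization of $D$ in the sense of Lemma \ref{sublocalization} (it is the intersection of the one-element family $\{D_{P_\lambda}\}$ of rings of fractions of $D$, or, if one prefers, $S_\lambda=\bigcap\{D_Q\mid Q\in\Spec(D),\ S_\lambda\subseteq D_Q\}$). Consequently $\{S_\lambda\mid\lambda\in\Lambda\}$ is a family of sublocalizations of $D$, and by hypothesis $D=\bigcap\{S_\lambda\mid\lambda\in\Lambda\}$ is locally finite, so the opening hypotheses of Theorem \ref{Theorem H} hold verbatim.

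Next I would note that each $S_\lambda$ is a valuation domain, and a valuation domain is trivially a P$\vv$MD (indeed a Pr\"ufer domain): since the ideals of a valuation domain are totally ordered, every nonzero finitely generated ideal is principal, hence invertible, hence $\ttt$-invertible, and $\ttt=\vv_{\!_f}$ is exactly the finite-type operation relevant to the notion of P$\vv$MD. Thus the extra hypothesis of Theorem \ref{Theorem H}(2) is met, and that theorem yields that $D$ is a P$\vv$MD. (Alternatively, one may invoke Theorem \ref{Theorem H}(1) with the trivial defining family $\boldsymbol{\mathcal T}_\lambda:=\{S_\lambda\}$, so that $\ast_\lambda=\wedge_{\{S_\lambda\}}=\dd_{S_\lambda}$ is a finite-type (semi)star operation and $S_\lambda$, being Pr\"ufer, is a P$\dd_{S_\lambda}$MD; part (1) then gives a P$\boldsymbol\star$MD with $\boldsymbol\star\le\vv$, hence again a P$\vv$MD.) There is essentially no obstacle: the whole content is already packaged in Theorem \ref{Theorem H}, and the corollary is a matter of recognizing that essential valuation overrings are simultaneously rings of fractions of $D$ and valuation domains.
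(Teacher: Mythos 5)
Your proof is correct and matches the paper's intended derivation: the paper gives no separate argument for this corollary, stating only that it follows immediately from Theorem \ref{Theorem H}, and your two observations (that $S_\lambda=D_{P_\lambda}$ is a ring of fractions, hence a sublocalization, and that a valuation domain is a Pr\"ufer domain, hence a P$\vv$MD) are exactly what makes Theorem \ref{Theorem H}(2) apply.
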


\begin{corollary}\label{Corollary M}
 Let $\{S_{\lambda}\mid  \lambda \in \Lambda\}$ be a family of sublocalizations
of an integral domain $D$.
Assume that
 $D=\bigcap \{S_{\lambda}\mid  \lambda \in \Lambda\}$, where the intersection is
locally finite.
 If each of $S_\lambda$ is a Pr\"ufer domain,
then $D$ is a P \!$\vv$MD.
\end{corollary}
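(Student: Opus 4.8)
The plan is to derive the statement as an immediate consequence of Theorem~\ref{Theorem H}(2); the single preliminary step is the elementary observation that every Pr\"ufer domain is a P$\vv$MD.

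First I would record that implication. In a Pr\"ufer domain every nonzero finitely generated ideal $I$ is invertible, i.e.\ $II^{-1}=D$, and therefore $(II^{-1})^{\ttt}=D^{\ttt}=D$; thus $I$ is $\ttt$-invertible. Since a P$\vv$MD is, by definition, a domain all of whose nonzero finitely generated ideals are $\vv_{_{\!f}}$-invertible, and $\vv_{_{\!f}}=\ttt$, this shows that every Pr\"ufer domain is a P$\vv$MD. At heart this is just the remark that $\dd\le\vv$ turns $\dd$-invertibility into $\ttt$-invertibility.

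Next I would apply Theorem~\ref{Theorem H}(2) to the given family $\{S_{\lambda}\mid\lambda\in\Lambda\}$: by hypothesis it is a family of sublocalizations of $D$ and $D=\bigcap\{S_{\lambda}\mid\lambda\in\Lambda\}$ is a locally finite intersection, while by the previous step each $S_{\lambda}$, being a Pr\"ufer domain, is a P$\vv$MD. The conclusion of Theorem~\ref{Theorem H}(2) is exactly that $D$ is a P$\vv$MD, which is the assertion.

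I do not anticipate any genuine obstacle: once one knows that Pr\"ufer domains are P$\vv$MDs, Theorem~\ref{Theorem H}(2) does all the work. If one preferred to pass instead through Theorem~\ref{Theorem H}(1), one would, for each $\lambda$, fix a defining family $\boldsymbol{\mathcal{T}}_{\lambda}=\{D_{T_{\alpha_{\lambda}}}\}$ of $S_{\lambda}$ consisting of rings of fractions of $D$ (available since $S_{\lambda}$ is a sublocalization of $D$) and set $\ast_{\lambda}:=\wedge_{\boldsymbol{\mathcal{T}}_{\lambda}}$; hypothesis (b) of that part would then be automatic, because an invertible ideal of $S_{\lambda}$ is $\ast_{\lambda}$-invertible for every semistar operation on $S_{\lambda}$. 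The only mildly delicate point in that route is ensuring hypothesis (a), that $\ast_{\lambda}$ be of finite type, which is why I would favour the direct appeal to part (2).
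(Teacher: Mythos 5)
Your proposal is correct and matches the paper's (implicit) argument: the authors simply state that Corollary~\ref{Corollary M} is deduced immediately from Theorem~\ref{Theorem H}, and the intended route is exactly yours, namely that a Pr\"ufer domain is a P$\vv$MD (since every nonzero finitely generated ideal is invertible, hence $\ttt$-invertible) and then Theorem~\ref{Theorem H}(2) applies verbatim to the locally finite family of sublocalizations. Your side remark about the delicacy of hypothesis (a) in the alternative route through Theorem~\ref{Theorem H}(1) is a sensible observation, and your choice to avoid it by using part (2) directly is the right one.
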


\medskip

Let $\star$ be a semistar operation on an integral domain $D$. We say
that $D$ is a {\it $\star$-Noetherian domain} if $D$ has the ascending chain condition on
quasi-$\star$-ideals.
Note that the $\dd$-Noetherian domains are just the usual Noetherian domains and the notions
of $\vv$-Noetherian (respectively, $\ww$-Noetherian) domain and {\it Mori} (respectively,
{\it strong Mori}) {\it domain} coincide.
Recall that, in the star case, the concept of {\it star Noetherian domain} has been
introduced by M. Zafrullah \cite{Zaf 7} (see, also, for instance, \cite{And-And}, \cite{G-J-S} and \cite{Pi}).

The following properties follow easily from the definitions (for more details, see for instance \cite[Lemma 4.16 and 4.18, and Corollary 4.19]{Pi}  or \cite[Lemma  3.1 and 3.3]{EFP}).
\begin{enumerate}
\item If $\star_1 \leq  \star_2$ are two semistar operations on $D$, then $D$ is $\star_1$-Noetherian implies that
$D$ is $\star_2$-Noetherian; in particular, a Noetherian domain is a $\star$-Noetherian domain, for any semistar
operation $\star$ on $D$.

\item  If $\star$ is a (semi)star operation and if $D$ is a $\star$-Noetherian domain,
then $D$ is a Mori domain.

\item $D$ is $\star$-Noetherian if and only if, for each nonzero ideal $I$ of $D$, there exists a nonzero finitely generated ideal $J$ of $D$ such that $J \subseteq I$ and $J^\star = I^\star$.

\item $D$ is $\star$-Noetherian if and only if $D$   is  $\stf$-Noetherian; in particular, the notions of $\vv$-Noetherian domain and $\ttt$-Noetherian domain coincide with the notion of Mori domain.

\end{enumerate}

\bigskip
 The following Proposition extends to the semistar setting a  result obtained by  Querr\'e   in 1976 (see the following Remark \ref{mori-case}).

\begin{proposition}\label{Proposition PG}
Let $\{S_{\lambda}\mid  \lambda \in \Lambda\}$ be a family of overrings of $D$ such that
 $D=\bigcap \{S_{\lambda}\mid  \lambda \in \Lambda\}$ and the intersection is
locally finite.
Let  $\ast _{\lambda}$ be a (semi)star operation on $S_\lambda$ and consider the semistar operation on $D$, $\boldsymbol{\star}:= \bigwedge \{(\ast_\lambda)^{{\co}_{\lambda}} \mid \lambda \in \Lambda\}$.
\begin{itemize}
\item [(1)] Assume that, for each $\lambda \in \Lambda$,
 $S_{\lambda}$ is
$\ast _{\lambda}$-Noetherian, then $D$ is $\boldsymbol{\star}$-Noetherian.
\item[(2)] Assume that, for each $\lambda \in \Lambda$,
 $S_{\lambda}$ is
 ${\widetilde{\ast_\lambda}}$-Noetherian and   that the semistar operation  $\bullet:= \bigwedge \{(\widetilde{\ast_\lambda})^{{\co}_{\lambda}} \mid \lambda \in \Lambda\}$ on $D$ is stable (e.g., when the 
$S_{\lambda}$'s are quotient rings of $D$), then $D$ is  $\widetilde{\boldsymbol{\star}}$-Noetherian.
\end{itemize}
\end{proposition}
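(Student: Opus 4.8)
The plan is to reduce both statements to the characterization of $\star$-Noetherian domains given in item (3) of the list just before the Proposition: $D$ is $\star$-Noetherian if and only if every nonzero ideal $I$ of $D$ contains a nonzero finitely generated ideal $J$ with $J^\star = I^\star$. So fix a nonzero ideal $I$ of $D$; the goal is to produce such a $J$ for $\star = \boldsymbol{\star}$ (part (1)) or for $\star = \widetilde{\boldsymbol{\star}}$ (part (2)).

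For part (1), I would first apply the hypothesis $S_\lambda$ is $\ast_\lambda$-Noetherian, together with item (3) applied over $S_\lambda$, to the extended ideal $IS_\lambda$: there is a finitely generated ideal of $S_\lambda$ contained in $IS_\lambda$, which we may write as $(x_{\lambda 1},\dots,x_{\lambda n_\lambda})S_\lambda$ with $x_{\lambda\mu}\in IS_\lambda$, such that $(IS_\lambda)^{\ast_\lambda} = ((x_{\lambda 1},\dots,x_{\lambda n_\lambda})S_\lambda)^{\ast_\lambda}$. This is exactly the hypothesis needed to invoke Proposition \ref{Proposition G}, whose conclusion hands us directly a finitely generated ideal $J\subseteq I$ of $D$ with $J^{\boldsymbol{\star}} = I^{\boldsymbol{\star}}$. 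By item (3) again (now read from right to left over $D$), this says $D$ is $\boldsymbol{\star}$-Noetherian, which completes (1).

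For part (2), the same scheme runs with $\ast_\lambda$ replaced by $\widetilde{\ast_\lambda}$ throughout: applying (1) (or its proof) to the family $\{\widetilde{\ast_\lambda}\}$ shows that $D$ is $\bullet$-Noetherian, where $\bullet = \bigwedge\{(\widetilde{\ast_\lambda})^{\co_\lambda}\mid \lambda\in\Lambda\}$. Now I would invoke the stability hypothesis on $\bullet$: by Proposition \ref{Proposition G} (applied to the finite-type operations $\widetilde{\ast_\lambda}$) the operation $\bullet$ is moreover of finite type, so a stable semistar operation of finite type, hence $\bullet = \widetilde{\bullet}$. It then remains to identify $\widetilde{\bullet}$ with $\widetilde{\boldsymbol{\star}}$. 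Since $\widetilde{\ast_\lambda}\leq \ast_\lambda$ implies $(\widetilde{\ast_\lambda})^{\co_\lambda}\leq (\ast_\lambda)^{\co_\lambda}$ (Lemma 3.1(3)), we get $\bullet \leq \boldsymbol{\star}$, hence $\widetilde{\bullet}\leq\widetilde{\boldsymbol{\star}}$; for the reverse, one shows $\widetilde{\boldsymbol{\star}}\leq \bullet$ by checking that $\widetilde{\boldsymbol{\star}}$, being stable of finite type, is dominated by each $(\widetilde{\ast_\lambda})^{\co_\lambda}$ — using that $\QMax^{\boldsymbol{\star}_f}(D)$ localizations factor through the $S_\lambda$'s — and then $\widetilde{\boldsymbol{\star}}=\widetilde{\bullet}=\bullet$. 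Being $\bullet$-Noetherian is then being $\widetilde{\boldsymbol{\star}}$-Noetherian, as desired.

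The routine parts are the invocations of Proposition \ref{Proposition G} and item (3); the one point deserving genuine care is the identification $\widetilde{\boldsymbol{\star}} = \bullet$ in part (2), i.e.\ checking that passing to the stable-finite-type operation commutes with the ``$\bigwedge$ of contractions'' construction when $\bullet$ is assumed stable — this is where the stability hypothesis is actually consumed, and it is the step I expect to require the most attention. The parenthetical case (the $S_\lambda$ are rings of fractions of $D$) is automatic because then each $(\widetilde{\ast_\lambda})^{\co_\lambda}$ is induced by a family of rings of fractions of $D$, hence stable, and an intersection (i.e.\ $\bigwedge$) of stable semistar operations is stable.
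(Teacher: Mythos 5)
Your treatment of part (1) is exactly the paper's argument: use the $\ast_\lambda$-Noetherian hypothesis to write $(IS_\lambda)^{\ast_\lambda}$ as $((x_{\lambda 1},\dots,x_{\lambda n_\lambda})S_\lambda)^{\ast_\lambda}$ with generators in $IS_\lambda$, feed this into Proposition \ref{Proposition G} to get a finitely generated $J\subseteq I$ with $J^{\boldsymbol{\star}}=I^{\boldsymbol{\star}}$, and conclude by the characterization of $\star$-Noetherian domains. Likewise, your reduction of part (2) to ``$D$ is $\bullet$-Noetherian'' by running (1) for the family $\{\widetilde{\ast_\lambda}\}$ is what the paper does (it rewrites the argument rather than citing (1), but the content is identical).

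The problem is the step you yourself flag as the crux of part (2): the identification $\widetilde{\boldsymbol{\star}}=\bullet$. The inequality $\bullet\leq\widetilde{\boldsymbol{\star}}$ is fine and you derive it correctly ($\bullet$ is of finite type because each $(\widetilde{\ast_\lambda})^{\co_\lambda}$ is and the intersection is locally finite, it is stable by hypothesis, and $\bullet\leq\boldsymbol{\star}$, so $\bullet=\widetilde{\bullet}\leq\widetilde{\boldsymbol{\star}}$). But the reverse inequality $\widetilde{\boldsymbol{\star}}\leq\bullet$ is not established by your sketch --- ``$\QMax^{\boldsymbol{\star}_{\!f}}(D)$ localizations factor through the $S_\lambda$'s'' is not an argument, and there is no reason in this generality why the largest stable finite-type operation below $\boldsymbol{\star}=\bigwedge(\ast_\lambda)^{\co_\lambda}$ should coincide with the particular one obtained by taking tildes before contracting; i.e., the tilde construction need not commute with $\bigwedge\circ\,\con$. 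Fortunately this identification is not needed. Since $\bullet\leq\widetilde{\boldsymbol{\star}}$ and $\star_1$-Noetherian implies $\star_2$-Noetherian whenever $\star_1\leq\star_2$ (item (1) of the list preceding the Proposition), the fact that $D$ is $\bullet$-Noetherian already gives that $D$ is $\widetilde{\boldsymbol{\star}}$-Noetherian. This monotonicity step is exactly how the paper closes the proof, and every ingredient for it is already in your write-up; you should simply delete the attempted equality rather than try to repair it.
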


\begin{proof}
 (1)  Given a nonzero ideal $I$ of $D$, since  $S_{\lambda}$ is ${\ast_\lambda}$-Noetherian
there exists a nonzero finitely generated ideal  $J_\lambda$ in $S_\lambda$ such that $J_{\lambda}\subseteq IS_{\lambda}$ and  $(IS_{\lambda})^{{\ast_\lambda}}= (J_\lambda)^{{\ast_\lambda}}$.  Since $D=\bigcap \{S_{\lambda}\mid  \lambda \in \Lambda\}$ and the intersection is
locally finite,
by Lemma \ref{Lemma E} and Proposition \ref{Proposition G}, we can  assume that $(J_\lambda)^{\ast_\lambda} = (JS_\lambda)^{\ast_\lambda}$,   for each $\lambda \in \Lambda$,  where $J$ is a finitely generated ideal of $D$   such that  $J \subseteq I$ and $J^{\boldsymbol{\star}}= I^{\boldsymbol{\star}}$.

(2)  Note that $\bullet \leq \widetilde{\boldsymbol{\star}}\leq \boldsymbol{\star}_{\!_f} \leq \boldsymbol{\star} $. Indeed, we have  $\bullet$ is stable; moreover $\bullet$ is a (semi)star operation of finite type, since $(\widetilde{\ast_\lambda})^{{\co}_{\lambda}}                                                                                                                                                                                                                                                                                                                                                                                                                                                                                                                                                                                                                             $ is of finite type, for each  $\lambda \in \Lambda$, and $D=\bigcap \{S_{\lambda}\mid  \lambda \in \Lambda\}$  is
locally finite (see \cite[Theorem 2(4)]{And} and \cite[Proposition 2.9]{FiSp}).

If we show that $D$ is  $\bullet $-Noetherian, then a fortiori we have that $D$ is  $\widetilde{\boldsymbol{\star}}$-Noetherian.
For this, given ideal $I$ of $D$ we have $(IS_{\lambda})^{\widetilde{\ast_\lambda}} =
\bigcap \{ (IS_{\lambda})_{\mathfrak{q}_{\lambda \mu}} \mid \mathfrak{q}_{\lambda \mu} \in \Max^{\widetilde{\ast_\lambda}}(S_\lambda) \}$.
Since  $S_{\lambda}$ is ${\widetilde{\ast_\lambda}}$-Noetherian   (and ${\widetilde{\ast_\lambda}} $ is of finite type), 
there exists a finitely generated ideal  $J_\lambda$ in $S_\lambda$ such that
$J_{\lambda}\subseteq IS_{\lambda}$ and $(J_\lambda)^{\widetilde{\ast_\lambda}} =(IS_{\lambda})^{\widetilde{\ast_\lambda}}
$.
Again, as $D=\bigcap \{S_{\lambda}\mid  \lambda \in \Lambda\}$  is
locally finite, Proposition \ref{Proposition G}
applies, and so there exists a finitely generated ideal $J$ of $D$, such that $J \subseteq I$ and $J^\bullet = I^\bullet$.  Therefore, $D$ is $\bullet$-Noetherian.
\end{proof}

 From the previous proposition, we easily deduce the following. 

\begin{corollary} \label{cor-PG}
 If $\bigcap \{S_{\lambda}\mid  \lambda \in \Lambda\}$ is a locally finite defining family of  overrings 
 (respectively, $\ttt$-flat overrings) of an integral domain $D$,
and if each of $S_{\lambda}$ is a Mori (respectively, strong Mori) domain, then  $D$ is a Mori (respectively, strong Mori) domain.

\end{corollary}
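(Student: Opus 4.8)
The plan is to derive Corollary~\ref{cor-PG} as a direct application of Proposition~\ref{Proposition PG}, matching each of the two assertions (Mori, respectively strong Mori) to one of its two parts. First I would translate the hypotheses into the language of the proposition. For the Mori case, each $S_\lambda$ is a Mori domain, which by the remarks following the definition of $\star$-Noetherian domain means precisely that $S_\lambda$ is $\vv$-Noetherian (equivalently $\ttt$-Noetherian). So I would take $\ast_\lambda := \vv_\lambda$, the $\vv$-operation on $S_\lambda$, and set $\boldsymbol{\star} := \bigwedge\{(\vv_\lambda)^{{\co}_\lambda}\mid \lambda\in\Lambda\}$. Since the defining family $\{S_\lambda\}$ is locally finite by hypothesis, Proposition~\ref{Proposition PG}(1) applies and yields that $D$ is $\boldsymbol{\star}$-Noetherian. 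As $\boldsymbol{\star}$ is a (semi)star operation on $D$ (being the $\bigwedge$ of a locally finite family of (semi)star operations $(\vv_\lambda)^{{\co}_\lambda}$, cf.\ \cite[Theorem 2(4)]{And}), property~(2) of the list preceding the proposition gives that $D$ is a Mori domain.

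Next I would handle the strong Mori case. Here each $S_\lambda$ is assumed strong Mori, i.e.\ $\ww_\lambda$-Noetherian, and the $S_\lambda$'s are $\ttt$-flat overrings of $D$. Take $\ast_\lambda := \ttt_\lambda$ (so that $\widetilde{\ast_\lambda} = \ww_\lambda$, and $S_\lambda$ is $\widetilde{\ast_\lambda}$-Noetherian). To invoke Proposition~\ref{Proposition PG}(2) I must check that $\bullet := \bigwedge\{(\ww_\lambda)^{{\co}_\lambda}\mid\lambda\in\Lambda\}$ is stable on $D$; this is where the $\ttt$-flatness hypothesis is essential. Since each $S_\lambda$ is $\ttt$-flat over $D$, for each $\mathfrak{q}_\lambda\in\Max^{\ttts}(S_\lambda)$ we have $(S_\lambda)_{\mathfrak{q}_\lambda} = D_{\mathfrak{q}_\lambda\cap D}$, and because $\ww_\lambda = \wedge_{\Max^{\ttts}(S_\lambda)}$ it follows that $(\ww_\lambda)^{{\co}_\lambda} = \wedge_{\,\{D_{\mathfrak{q}_\lambda\cap D}\,\mid\,\mathfrak{q}_\lambda\in\Max^{\ttts}(S_\lambda)\}}$ is a spectral, hence stable, semistar operation on $D$; an intersection of stable operations is stable, so $\bullet$ is stable. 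Then Proposition~\ref{Proposition PG}(2) gives that $D$ is $\widetilde{\boldsymbol{\star}}$-Noetherian, where $\boldsymbol{\star} = \bigwedge\{(\ttt_\lambda)^{{\co}_\lambda}\}$; since $\widetilde{\boldsymbol{\star}}$ is a stable (semi)star operation of finite type on $D$ (by the same local-finiteness argument), and $\ww$-Noetherian means strong Mori, I would conclude $D$ is strong Mori — more precisely, $D$ being $\widetilde{\boldsymbol{\star}}$-Noetherian with $\widetilde{\boldsymbol{\star}}\le\vv$ forces $D$ to be $\ww$-Noetherian via property~(1) of the list, once one observes $\widetilde{\boldsymbol{\star}}\le\ww$ (this last comparison needs a short argument, since $\widetilde{\boldsymbol{\star}}$ is stable of finite type and dominated by $\vv$, hence by $\ttt$, hence equals $\ww$ or is smaller).

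The main obstacle I anticipate is verifying that the relevant operation on $D$ is a genuine (semi)star operation and of the right type (finite type / stable), and that its Noetherianity actually upgrades to the classical Mori or strong Mori property. The local-finiteness hypothesis is what guarantees $\boldsymbol{\star}$ and $\bullet$ remain of finite type after the $\bigwedge$ (via \cite[Theorem 2(4)]{And}, \cite[Corollary 2.9]{FiSp}); without it the construction would fail. For the strong Mori statement, the delicate point is really the stability of $\bullet$, which is exactly why the $\ttt$-flatness assumption appears in the respective clause — it is the hypothesis that lets one rewrite each $(\ww_\lambda)^{{\co}_\lambda}$ as a spectral operation on $D$. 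Everything else is bookkeeping with the equivalences recorded in the four-item list preceding Proposition~\ref{Proposition PG} and the standard facts that $\ttt$-Noetherian $=$ Mori and $\ww$-Noetherian $=$ strong Mori.
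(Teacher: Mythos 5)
Your proposal is correct and follows exactly the route the paper intends: the corollary is stated as an immediate consequence of Proposition~\ref{Proposition PG}, with part (1) (via $\ast_\lambda=\vv_\lambda$ and item (2) of the preceding list) giving the Mori case and part (2) giving the strong Mori case, where the $\ttt$-flatness hypothesis is used precisely as you use it, to make each $(\ww_\lambda)^{{\co}_\lambda}$ spectral (hence $\bullet$ stable) and then to compare the resulting stable finite-type (semi)star operation with $\ww$. Your filling-in of the final upgrade $\widetilde{\boldsymbol{\star}}\le\ww$ is the right (and only nontrivial) bookkeeping step.
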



\begin{remark} \label{mori-case}
{\rm  Note that the   Mori domain case in Corollary \ref{cor-PG} can be viewed as a ``non completely integrally closed version'' of the following well known result  \cite[Proposition 1.4]{Fos}: If $\bigcap \{S_{\lambda}\mid  \lambda \in \Lambda\}$ is a locally finite defining family of an integral domain $D$ and  if each of $S_{\lambda}$ a Krull domain, then  $D$ is a Krull domain.

 Moreover, recall that N. Dessagnes in 1987 proved that the intersection of any locally finite family of Mori domains, all contained in the same integral domain, is a Mori domain \cite[Proposition 3.2]{de} (see also \cite[Th\'eor\`eme 1]{Q}).
}
\end{remark}

Recall that an integral domain $D$ is a {\it weakly Krull domain} if
$D=\bigcap \{ D_{P} \mid P\in X^{1}(D) \}$,  where $X^{1}(D)$ denotes the set of height one primes of $D$, and the intersection is locally
finite. Weakly Krull domains were studied in \cite{AMZ 2}.

 It is
well known that if $D$ is a Mori domain then so is each of its rings of
fractions  \cite[Th\'eor\`eme 2]{Q}. Using this piece of information and Proposition \ref{Proposition PG}, we deduce immediately the following.

\begin{corollary}\label{Corollary R} A weakly Krull domain $D$ is a Mori domain if and only if $
D_{P} $ is a Mori domain for each $P\in X^{1}(D).$
\end{corollary}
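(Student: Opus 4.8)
The plan is to prove the two implications separately: the forward one will be a one-line consequence of a cited fact, and the backward one an application of Proposition \ref{Proposition PG}(1).

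For the ``only if'' direction, if $D$ is Mori then for each $P \in X^1(D)$ the localization $D_P$ is a ring of fractions of $D$, and since a ring of fractions of a Mori domain is again Mori (\cite[Th\'eor\`eme 2]{Q}, recalled above), $D_P$ is Mori.

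For the ``if'' direction, suppose $D_P$ is Mori for every $P \in X^1(D)$. Since $D$ is weakly Krull, $D = \bigcap\{D_P \mid P \in X^1(D)\}$ is a locally finite intersection of quotient rings of $D$, so I would apply Proposition \ref{Proposition PG}(1) to this family, taking for each $P \in X^1(D)$ the (semi)star $\vv$-operation $\ast_P := \vv(D_P)$ of $D_P$: indeed ``$D_P$ is Mori'' means precisely ``$D_P$ is $\vv(D_P)$-Noetherian'' (by the definition recalled before Proposition \ref{Proposition PG}), which is exactly the hypothesis of that proposition. Its conclusion is that $D$ is $\boldsymbol\star$-Noetherian, where $\boldsymbol\star := \bigwedge\{(\ast_P)^{\co} \mid P \in X^1(D)\}$, i.e.\ $E^{\boldsymbol\star} = \bigcap\{(ED_P)^{\vvs(D_P)} \mid P \in X^1(D)\}$ for every $E \in \FF(D)$.

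It then remains to pass from ``$\boldsymbol\star$-Noetherian'' to ``Mori''. Here I would observe that $(D_P)^{\vvs(D_P)} = D_P$ for every $P$, whence $D^{\boldsymbol\star} = \bigcap\{D_P \mid P \in X^1(D)\} = D$; thus $\boldsymbol\star$ is a \emph{(semi)star} operation on $D$, and therefore $\boldsymbol\star \leq \vv$, since $\vv$ is the largest (semi)star operation on $D$ (\cite[Theorem 34.1(4)]{Gil}). By property (1) of $\star$-Noetherian domains recalled before Proposition \ref{Proposition PG}, $D$ being $\boldsymbol\star$-Noetherian forces $D$ to be $\vv$-Noetherian, i.e.\ Mori. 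The only step deserving a moment's care is the identity $D^{\boldsymbol\star} = D$, which is what makes $\boldsymbol\star$ a genuine (semi)star operation and hence puts it below $\vv$; beyond that, the argument is a direct invocation of results already established, so no real obstacle is anticipated.
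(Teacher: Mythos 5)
Your proof is correct and follows exactly the route the paper intends: the ``only if'' direction via Querr\'e's theorem that rings of fractions of Mori domains are Mori, and the ``if'' direction by applying Proposition \ref{Proposition PG}(1) to the locally finite family $\{D_P \mid P\in X^1(D)\}$ with $\ast_P=\vv(D_P)$. Your final passage from ``$\boldsymbol\star$-Noetherian'' to ``Mori'' is just property (2) of the list recalled before Proposition \ref{Proposition PG} (a (semi)star-Noetherian domain is Mori), which you re-derive correctly via $D^{\boldsymbol\star}=D$ and $\boldsymbol\star\leq\vv$.
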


\section{Essential domains and Pr\"ufer $\vv$-multiplication domains}

In this section, we introduce a weak form of the finite character property of a defining family of a domain. As an application,  we shed new light on the question of when an essential domain is a P$\vv$MD solved recently  by Finocchiaro and Tartarone \cite{FT} using topological methods.

\medskip

Let $D$ be an integral domain, let
 $\mathcal{E}(D):=\{P\in \Spec(D) \mid D_P \, \mbox{ a valuation domain}\}$ be the set of all essential valuation overrings of $D$,
and let  $\emptyset \neq X\subseteq \Spec((D)$.
 We say that the domain $D$ is {\it $X$-essential }
 if $X\subseteq\mathcal{E}(D)$ and $D=\bigcap \{D_{P} \mid P \in X\}$.

Recall from \cite{BH} that a
prime ideal $Q$ of $D$ is an {\it associated prime of a principal
ideal $aD$ of $D$}, if $Q$ is minimal over $(aD :bD)$ for
some $b \in D \setminus  aD$ . For brevity, we call $Q$ an
{\it associated prime of $D$} and we denote by  $\Assp(D)$ the set of the associated prime ideals of $D$.
We say that $D$ is a {\it \texttt{P}-domain} if,
for every $Q\in  \Assp(D)$, $D_Q$ is a valuation domain  \cite{MZ}.  Note that a P$v$MD is a \texttt{P}-domain and not  conversely    \cite[Corollary 1.4 and Example 2.1]{MZ}.

As we   remarked above an important class of classical domains are $X$-essential for some nonempty set $X\subseteq \Spec(D)$,   i.e., weakly Krull domains, for $X= X^{1}(D)$. Moreover, 
if $X=\Max(D)$
 (or, even, $X = \Spec(D)$) (respectively, $X=\Max^{\ttts}(D)$;
  $X=\Assp(D)$) we get Pr\"ufer domains (respectively, P$\vv$MDs;  \texttt{P}-domains).

Let $D$ be an $X$-essential domain,  the (semi)star operation on $D$, $\ast_X$, induced by the nonempty family of overrings
$\boldsymbol{\mathscr{X}}:= \{D_{P} \mid P\in X \}$, i.e., $\ast_X := \wedge_ {\boldsymbol{\mathscr{X}}}$ (defined by $E^{\ast_X} := \bigcap \{ ED_P \mid P \in X \}$ for each $E \in  \FF(D)$),
 is crucial for studying these  domains as the following proposition shows.

\begin{proposition} \label{essential}
Let $D$ be an integral domain, let $\emptyset \neq X \subseteq \Spec(D)$ such that
$D=\bigcap \{D_{P} \mid P \in X\}$ and $\ast_X$ the star operation
on $D$ induced by the family of overrings $ \{D_{P} \mid P \in X\}$.
Then, the following are equivalent.
\begin{itemize}
\item[(i)] $D$ is an $X$-essential domain.
\item[(ii)] Every $\ast_X$-finite ideal is   $\ast_X$-invertible.
\end{itemize}
\end{proposition}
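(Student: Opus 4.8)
The plan is to prove the two implications separately, using the characterization of essential domains via valuation localizations together with the fact that $\ast_X$ is a stable (semi)star operation (being induced by a family of rings of fractions, by Proposition \ref{Lemma A}(3) applied to each $D_P$, or more directly since each $D_P$ is $D$-flat).

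For the implication $(i) \Rightarrow (ii)$: Suppose $D$ is $X$-essential, so $D_P$ is a valuation domain for every $P \in X$. Let $E$ be a $\ast_X$-finite ideal, say $E^{\ast_X} = F^{\ast_X}$ for some $F \in \f(D)$; I may as well take $E = F$ finitely generated, and I want to show $(FF^{-1})^{\ast_X} = D$. Since $\ast_X$ is stable (hence $\ast_X = \widetilde{\ast_X} = \wedge_{\QMax^{\ast_X}(D)}$ as a spectral operation of finite type), it suffices to show $FF^{-1} \nsubseteq M$ for every $M \in \QMax^{\ast_X}(D)$. The key point is that every quasi-$\ast_X$-maximal $M$ is contained in some $P \in X$ (because $\ast_X \leq \s_X$ forces $M D_P \neq D_P$ for at least one $P \in X$, since otherwise $M^{\ast_X} = \bigcap M D_P = D_P$-stuff... more carefully: $M \subseteq M^{\ast_X} \cap D = M$, and $M^{\ast_X} = \bigcap_{P \in X} M D_P \neq D$ forces $MD_P \neq D_P$ for some $P$, i.e., $M \subseteq M D_P \cap D = $ a prime containing $M$, contained in $P$; by maximality issues one gets $M \subseteq P$). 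Then localize at $P$: since $D_P$ is a valuation domain, $FD_P$ is principal, so invertible, whence $F F^{-1} D_P = D_P$ — using that $F^{-1} D_P = (FD_P)^{-1}$ for finitely generated $F$ and $D$-flat $D_P$ (this is the Remark after Proposition \ref{Lemma A}, or Proposition \ref{Lemma A}(4)). Therefore $FF^{-1} \nsubseteq P \supseteq M$, so $FF^{-1} \nsubseteq M$, and $(FF^{-1})^{\ast_X} = D$.

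For the implication $(ii) \Rightarrow (i)$: Assume every $\ast_X$-finite ideal is $\ast_X$-invertible; I must show each $D_P$ with $P \in X$ is a valuation domain. Fix $P \in X$ and take $0 \neq a, b \in D$; it suffices to show $aD_P$ and $bD_P$ are comparable. Consider the ideal $I := aD + bD \in \f(D)$, which is trivially $\ast_X$-finite, hence $\ast_X$-invertible by hypothesis: $(II^{-1})^{\ast_X} = D$. Localizing at $P$ and using stability/flatness as above, $I I^{-1} D_P = D_P$, so $ID_P = (aD+bD)D_P$ is an invertible, hence principal, ideal of the local ring $D_P$. A finitely generated ideal of a domain that is locally principal and actually principal at $D_P$ being generated by (say) $a$ or $b$ — more precisely, $(a,b)D_P$ principal in a local domain means one of $a/b$, $b/a$ lies in $D_P$, i.e., $aD_P \subseteq bD_P$ or $bD_P \subseteq aD_P$. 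Since $a, b$ were arbitrary, $D_P$ is a valuation domain, so $D$ is $X$-essential.

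\textbf{Main obstacle.} The step I expect to require the most care is the structural claim that every quasi-$\ast_X$-maximal ideal $M$ of $D$ is contained in some member $P$ of $X$ (equivalently, that $\ast_X$ being spectral of finite type, its quasi-maximal spectrum "lies under" $X$). This needs the stability of $\ast_X$ and the identity $\widetilde{\ast_X} = \ast_X = \wedge_{\QMax^{\ast_X}(D)}$ together with a comparison between $\QMax^{\ast_X}(D)$ and $X$; the subtlety is that $X$ need not consist of $\ast_X$-primes itself, so one argues via $M^{\ast_X} \neq D^{\ast_X}$ forcing $MD_P \subsetneq D_P$ for some $P \in X$ and then locating a prime of $D$ contained in $P$ that contains $M$ — and invoking maximality of $M$ to identify it with $M$ or to finish the non-containment $FF^{-1} \nsubseteq M$. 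Everything else (flatness of $D_P$, behavior of inverses of finitely generated ideals under flat extension, principal $\Leftrightarrow$ comparable in a local domain) is routine and quotable from the preliminaries.
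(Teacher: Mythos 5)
Your implication (ii) $\Rightarrow$ (i) is correct and is essentially the paper's argument: a nonzero finitely generated ideal of $D_P$ extends from a finitely generated ideal $I$ of $D$, the hypothesis gives $(II^{-1})^{\ast_X}=D$, and localizing (using $I^{-1}D_P=(ID_P)^{-1}$ for the flat overring $D_P$) shows $ID_P$ is invertible, hence principal, in the local domain $D_P$; so $D_P$ is a valuation domain. The preliminary reduction from a general $\ast_X$-finite ideal $E$ to a finitely generated $F$ with $E^{\ast_X}=F^{\ast_X}$ (hence $E^{-1}=F^{-1}$) is also fine.

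The implication (i) $\Rightarrow$ (ii), however, rests on a false step. You assert that, being stable, $\ast_X$ satisfies $\ast_X=\widetilde{\ast_X}=\wedge_{\QMax^{\ast_X}(D)}$ and is ``spectral of finite type,'' and you then reduce $(FF^{-1})^{\ast_X}=D$ to showing $FF^{-1}\nsubseteq M$ for all $M\in\QMax^{\ast_X}(D)$. Stability does not imply finite type: the paper notes explicitly that for a family of $D$-flat overrings the induced operation is stable but not necessarily of finite type, and both the identification of stable with spectral operations and the criterion ``$(II^{-1})^{\star}=D^{\star}$ iff $II^{-1}$ lies in no quasi-$\star$-maximal ideal'' are available only in the finite-type case (without finite type one cannot even guarantee that proper quasi-$\ast_X$-ideals sit inside quasi-$\ast_X$-maximal ones). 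Worse, if your claim were true for $X=\mathcal{E}(D)$, then by Corollary \ref{setGVFCP} and Theorem \ref{essentialPVMD} every essential domain would be a P$\vv$MD, contradicting the Heinzer--Ohm example cited in the introduction. The detour is also unnecessary: the local computation you already carry out, namely $FF^{-1}D_P=FD_P(FD_P)^{-1}=D_P$ for each $P\in X$ (valid because $F$ is finitely generated, $D_P$ is $D$-flat, and $D_P$ is a valuation domain), gives directly $(FF^{-1})^{\ast_X}=\bigcap\{FF^{-1}D_P\mid P\in X\}=\bigcap\{D_P\mid P\in X\}=D$, which is exactly the paper's one-line proof. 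Deleting the quasi-maximal reduction and keeping the localization argument repairs the direction completely.
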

\begin{proof}
(i) $\Rightarrow$ (ii) Let $I\in \f(D)$ and $P\in X$.
Then $II^{-1}D_P=ID_P(ID_P)^{-1}=D_P$ since $D_P$ is a valuation domain. Hence, $(II^{-1})^{\ast_X}=D$.

(ii) $\Rightarrow$ (i)  Let $P\in X$ and $J$ a nonzero finitely generated ideal of $D_P$. Then $J=ID_P$ for
some finitely generated ideal $I$ of $D$. We have $JJ^{-1}=ID_P(ID_P)^{-1}=(II^{-1})D_P=(II^{-1})^{\ast_X}D_P=D_P$. So
$D_P$ is a local Pr\"ufer domain, and hence a valuation domain.
\end{proof}

\begin{remark}
{\rm
Note that Proposition \ref{essential} provides a general setting for a well known result on Pr\"ufer domains
(i.e., for $X=\Max(D)$ and $\ast_X=\dd$) or on  P$\vv$MDs (i.e., for $X=\Max^{\ttts}(D)$ and $\ast_X=\ww$).
On the other hand, a Dedekind domain (respectively, a Krull domain) is a Pr\"ufer Noetherian domain (respectively, a P$\vv$MD Mori ($\ww$-Noetherian) domain; note that in this situation $\ttt= \ww$), that is, in both cases,  $X$-essential and $\ast_X$-Noetherian domain.
We call a {\it $X$-Dedekind domain} an integral domain with these latter properties.
Since  on an $X$-essential and $\ast_X$-Noetherian domain, the (semi)star operation $\ast_X$ is of finite type, then  $ {\ast_X} \leq \ttt $ and so $\Max^{\ttts}(D)\subseteq  \Max^{\ast_X}(D)$ \cite[Lemma 2.1]{AMZ 2}.
Hence  an $X$-Dedekind domain $D$ is always a P$\vv$MD with $\ast_X=\ww$.
Thus, Dedekind  domains (i.e., when $\ast_X=\ww=\dd $) and Krull domains (i.e., when $\ast_X=\ww$) are   the only  $X$-Dedekind domains.
}
 \end{remark}

We say that  a defining family $\{S_\lambda \mid \lambda \in \Lambda \}$ of an integral domain $D$  has {\it GV-finite character property} if, for each  ideal $I$ of $D$ such that
 $IS_\lambda=S_\lambda$ for every $\lambda\in \Lambda $,
 there exists  a  finitely generated ideal  $J\subseteq I$ of $D$ such that $JS_\lambda=S_\lambda$ for every $\lambda$.
 Note that   the abbreviation``GV'' stands for Glaz-Vasconcelos, since we will see that the  GV-finite character property can be characterized by a general version of the notion of $H$-domain, introduced by  Glaz and Vasconcelos in \cite{G-V}.

\medskip

Obviously,  every defining family of overrings of a Noetherian domain has GV-finite character property.
Note that GV-finite character property is an extension of the finite character property. Indeed, assume that $D = \bigcap \{ S_\lambda \mid \lambda \in \Lambda \}$ has the finite character property and let $I$ be an ideal of $D$ such that $IS_\lambda=S_\lambda$ for every $\lambda$.
Let $0\not=x\in I$ and let $\{S_{\lambda_k} \mid 1 \leq k \leq n \}$ be the only $\lambda$'s such that $xS_\lambda \not=S_\lambda$.
For each $k$, there exists $J_k\subseteq I$ a finitely generated ideal of $D$
such that $IS_{\lambda_k}=J_kS_{\lambda_k}=S_{\lambda_k}$.
Take $J$ to be the finitely generated subideal of $I$ generated by $x$ and the $J_k$'s,  for $1 \leq k \leq n$, then it is straightforward that $JS_\lambda = IS_\lambda$ for each $\lambda \in \Lambda$.

\begin{proposition} \label{GVFCP} 
  Let $D$ be an integral domain and let $\boldsymbol{\mathscr{S}}:= \{S_\lambda \mid \lambda \in \Lambda\}$ be  a defining family of overrings of $D$.
 Denote by  $\ast$ the (semi)star operation
on $D$ induced by the defining family of overrings $\boldsymbol{\mathscr{S}}$ of $D$, i.e.,
 $\ast := \wedge_{\boldsymbol{\mathscr{S}}}$. Then, the following are equivalent.
\begin{itemize}
\item[(i)]  $\boldsymbol{\mathscr{S}}$  has GV-finite character property;
\item[(ii)] for every ideal $I$ of $D$ such that $I^\ast=D$, there exists a finitely generated  $J$ ideal of $D$ such that $J\subseteq I$  and $J^\ast=D$;
\item[(iii)] the stable (semi)star operation $\bar{\ast}$, canonically associated to $\ast$,  is of finite type, i.e., $\bar{\ast}=\widetilde{\ast}$.

\end{itemize}
\end{proposition}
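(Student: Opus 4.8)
The plan is to prove the chain of equivalences (i) $\Leftrightarrow$ (ii) $\Leftrightarrow$ (iii), treating each arrow in turn. The equivalence (i) $\Leftrightarrow$ (ii) should be essentially a reformulation: since $\ast = \wedge_{\boldsymbol{\mathscr{S}}}$, for an ideal $I$ of $D$ one has $I^\ast = \bigcap_\lambda IS_\lambda$, and the condition $I^\ast = D$ needs to be compared with the condition $IS_\lambda = S_\lambda$ for every $\lambda$. Clearly $IS_\lambda = S_\lambda$ for all $\lambda$ forces $I^\ast = \bigcap_\lambda S_\lambda = D$. Conversely, if $I^\ast = D$, I would pass to the integral closure/unit argument: $1 \in I^\ast$ means $1 \in IS_\lambda$ for every $\lambda$, hence $IS_\lambda = S_\lambda$ for every $\lambda$. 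So the two hypotheses coincide, and likewise the conclusions ($JS_\lambda = S_\lambda$ for all $\lambda$ versus $J^\ast = D$) coincide by the same reasoning applied to $J$. Thus (i) and (ii) say exactly the same thing once unwound.

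For (ii) $\Rightarrow$ (iii), I would use the explicit description of $\bar\ast$ and $\widetilde{\ast}$ from Section 2: $E^{\bar\ast} = \bigcup\{(E:I) \mid I \text{ nonzero ideal of } D,\ I^\ast = D^\ast = D\}$, while $E^{\widetilde{\ast}} = \bigcup\{(E:J) \mid J \text{ nonzero f.g. ideal},\ J^\ast = D\}$. Since $\widetilde{\ast} \leq \bar\ast$ always holds (each f.g. ideal with $J^\ast = D$ is among the ideals defining $\bar\ast$), it suffices to show $\bar\ast \leq \widetilde{\ast}$. Given $z \in E^{\bar\ast}$, there is a nonzero ideal $I$ with $I^\ast = D$ and $zI \subseteq E$. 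By (ii), choose a finitely generated $J \subseteq I$ with $J^\ast = D$; then $zJ \subseteq zI \subseteq E$, so $z \in (E:J) \subseteq E^{\widetilde{\ast}}$. Hence $\bar\ast = \widetilde{\ast}$, which is of finite type.

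For (iii) $\Rightarrow$ (ii), suppose $\bar\ast = \widetilde{\ast}$ and let $I$ be an ideal with $I^\ast = D$. The key point is that $I^\ast = D$ implies $1 \in I^{\bar\ast}$: indeed $I^\ast = D$ means $I$ is itself one of the ideals in the definition of $\bar\ast$, so taking $E = D$ and using $I$ as the defining ideal, $1 \cdot I \subseteq D$ gives $1 \in D^{\bar\ast} = \bigcup\{(D:I') \mid (I')^\ast = D\} \supseteq (D:I) \ni 1$ — more directly, $1 \in (D:I)$ with $I^\ast = D$ witnesses $1 \in D^{\bar\ast}$. Hmm, I should instead argue that $I^\ast = D$ forces $I^{\bar\ast} = D$: from $1 \in I^\ast$ and stability-type manipulations, or simply note $I \subseteq I^{\bar\ast}$ and $I^\ast = D$ combined with $\bar\ast \leq \ast$ is the wrong direction, so the cleanest route is: $I^{\bar\ast} \supseteq (I:I) \cdot$ — actually the honest statement is that for any ideal $I$, $I^{\bar\ast} = D$ iff $I^\ast = D$ (one direction is $\bar\ast \leq \ast$ read contrapositively on proper quasi-ideals; the other uses that $I$ defines $\bar\ast$). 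Granting $I^{\bar\ast} = D = I^{\widetilde\ast}$, and using that $\widetilde\ast$ is of finite type so $1 \in I^{\widetilde\ast} = \bigcup\{(D:J) \mid J \subseteq I \text{ f.g.},\, \ldots\}$ — more precisely $1 \in I^{\widetilde\ast}$ means $1 \cdot J \subseteq D$ for some f.g. $J$ with $J^\ast = D$, but I want $J \subseteq I$; the finite-type formula for $E^{\widetilde\ast}$ with $E = D$ gives $1 \in (D:J)$ for some f.g. $J$ with $J^\ast = D$, and I must still produce such $J$ inside $I$. This is where I expect the main obstacle: matching the ``$J \subseteq I$'' requirement of (ii) with the definitional form of $\widetilde\ast$. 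I would resolve it by applying the finite-type description not to $D$ but observing that $I^{\widetilde\ast} = \bigcup\{(I:J)\}$ is the wrong object too — instead I would use the characterization that $\widetilde\ast$ of finite type means $\widetilde\ast = \widetilde\ast_f$ together with the standard fact (cf. the $\star$-finite discussion right before Lemma~\ref{Lemma E}, and \cite[Lemma 2.3]{FP}) that $I^{\widetilde\ast} = I^{\bar\ast}$ being computed by f.g. subideals yields a f.g. $J \subseteq I$ with $J^{\bar\ast} = I^{\bar\ast} = D$, whence $J^\ast = D$ since $J^\ast \supseteq J^{\bar\ast}$ would need care — rather $J^{\bar\ast} = D$ forces $J^\ast = D$ because $J^\ast = D \Leftrightarrow J^{\bar\ast} = D$ as noted. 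This closes the loop. Throughout I am only using the explicit formulas for $\bar\ast$, $\widetilde\ast$, $\ast_f$ recalled in Section 2, the inequality $\widetilde\ast \leq \bar\ast \leq \ast$, and the $\ast_f$-finiteness lemma, so no new machinery is needed.
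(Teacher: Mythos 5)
Your proof is correct and follows essentially the same route as the paper: (i)$\Leftrightarrow$(ii) by unwinding $I^\ast=D$ into $1\in IS_\lambda$ for all $\lambda$, and (ii)$\Rightarrow$(iii) by replacing the ideal $I$ witnessing $x\in E^{\bar{\ast}}$ with a finitely generated $J\subseteq I$ having $J^\ast=D$ (the paper phrases this as ``$\bar{\ast}$ is of finite type'', you as ``$\bar{\ast}\leq\widetilde{\ast}$''; these coincide since $\widetilde{\ast}\leq\bar{\ast}$ always). One remark on your (iii)$\Rightarrow$(ii), which the paper dismisses as immediate: the ``main obstacle'' you wrestle with (producing $J\subseteq I$) is illusory, caused by applying the defining formula of $\widetilde{\ast}$ to $E=D$ instead of to $E=I$. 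Indeed, $I^\ast=D$ gives $1\in(I:I)\subseteq I^{\bar{\ast}}$, hence $I^{\widetilde{\ast}}=I^{\bar{\ast}}=D$; then $1\in I^{\widetilde{\ast}}=\bigcup\{(I:J)\mid J\ \text{f.g.},\ J^\ast=D\}$ yields a finitely generated $J$ with $J^\ast=D$ and $J=1\cdot J\subseteq I$, which is exactly (ii) -- no appeal to the $\ast_f$-finiteness lemma or to quasi-$\ast$-ideals is needed.
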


\begin{proof}
(i) $\Leftrightarrow$ (ii)  is straightforward.

(ii) $\Rightarrow$ (iii) Let $E\in \FF(D)$ and $x\in E^{\bar{\ast}}$. Let $I$ be a nonzero ideal of $D$ such that
$xI\subseteq E$ with $I^\ast=D$. By assumption, we can take $I$ finitely generated. Let $ F:=xI \in \f(D)$. Then $F\subseteq E$ and $x\in  F^{\bar{\ast}}$. Thus $\bar{\ast}$ is of finite type.

(iii) $\Rightarrow$ (ii) is an easy consequence of the definitions.
\end{proof}

The case when $D$ has a defining family of quotient rings, that is $D=\bigcap\{D_{P} \mid  P\in X\}$ for
 some $X\sub\Spec(D)$ is of particular interest.
  In this case, if  the  defining family $\{D_{P} \mid  P\in X\}$ of $D$ has GV-finite character property, we simply say that  the subset $X$ of $\Spec(D)$ has {\it GV-finite character property.} Note that, in this case, $\ast$ is necessarily stable, that is $\bar{\ast}=\ast$.
 Clearly, for any domain $D$, the sets $\Max(D)$ and $\Max^{\ttts}(D)$ have GV-finite character property. Therefore, from Proposition \ref{GVFCP} and from  \cite[Corollary 2.8 and Proposition 2.9]{FiSp}, we easily deduce the following.

 \begin{corollary}\label{setGVFCP}
 Let $D$ be an  integral domain and let $\boldsymbol{\mathscr{X}}:= \{D_{P} \mid P\in X \}$ be a defining family of quotient rings of $D$ for some nonempty $X\sub\Spec(D)$.  Let $\ast_X$ be the (semi)star operation on $D$,  induced by the family of overrings
$\boldsymbol{\mathscr{X}}:= \{D_{P} \mid P\in X \}$, i.e., $\ast_X := \wedge_ {\boldsymbol{\mathscr{X}}}$.
Then the following are equivalent.
\begin{itemize}\label{}
\item[(i)] $X$  has GV-finite character property;
\item[(ii)] If $I$ is an ideal of $D$ such that $I\nsubseteq P$ for every ideal $P\in X$, then there exists $J\subseteq I$ a finitely generated ideal of $D$ such that $J\nsubseteq P$ for every ideal $P\in X$;
\item[(iii)]  $\ast_X$ is of finite type;
\item[(iv)] $X$ is quasi-compact for the Zariski topology on $\Spec(R)$.
\end{itemize}
\end{corollary}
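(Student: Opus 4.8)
The equivalences (i) $\Leftrightarrow$ (iii) follow directly by specializing Proposition \ref{GVFCP}: since here $\boldsymbol{\mathscr{S}} = \boldsymbol{\mathscr{X}} = \{D_P \mid P \in X\}$ consists of quotient rings of $D$, the induced operation $\ast_X = \wedge_{\boldsymbol{\mathscr{X}}}$ is stable, so $\bar{\ast_X} = \ast_X$, and hence condition (iii) of Proposition \ref{GVFCP}, namely $\bar{\ast_X} = \widetilde{\ast_X}$, reads $\ast_X = \widetilde{\ast_X}$, i.e. $\ast_X$ is of finite type. Thus the plan is to invoke Proposition \ref{GVFCP} for the $X$-specialized statement and then separately handle the reformulation (ii) and the topological characterization (iv).

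For (i) $\Leftrightarrow$ (ii), I would unwind the definition of GV-finite character property in the special case $S_\lambda = D_P$: an ideal $I$ satisfies $IS_\lambda = S_\lambda$ for all $\lambda$ precisely when $ID_P = D_P$ for every $P \in X$, which by localization theory is equivalent to $I \nsubseteq P$ for every $P \in X$. Applying the same translation to the finitely generated subideal $J$ gives exactly the statement (ii). This step is essentially bookkeeping — the only mild point is the elementary fact that $ID_P = D_P \iff I \nsubseteq P$, which is standard.

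The substantive part is (iii) $\Leftrightarrow$ (iv), the quasi-compactness characterization, and this is where I would lean on the cited \cite[Corollary 2.8 and Proposition 2.9]{FiSp}. The idea is that $\ast_X$ being of finite type means $\ast_X = (\ast_X)_{_{\!f}} = \widetilde{\ast_X}$, and by the identification $\widetilde{\ast_X} = \wedge_{\QMax^{(\ast_X)_{\!f}}(D)}$ together with the general theory of spectral semistar operations, the finite-type property of a spectral operation $\mbox{\texttt{s}}_X$ corresponds to quasi-compactness of (the generization-closure, or the set itself under suitable hypotheses, of) $X$ in the Zariski topology. More concretely: if $\ast_X$ is of finite type and $X \subseteq \bigcup_i D(f_i)$ — i.e. no $P \in X$ contains the ideal $I = (\{f_i\})$ — then $I^{\ast_X} = D$, so by (iii) some finitely generated $J = (f_{i_1}, \dots, f_{i_m}) \subseteq I$ already has $J^{\ast_X} = D$, giving a finite subcover; the converse runs the same argument backwards. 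I expect the main obstacle to be stating this correspondence at exactly the right level of generality — one must be slightly careful about whether $X$ itself or its closure under generization is what is quasi-compact, and whether the cited results are being applied to $\Spec(D)$ or to a subspace — but since \cite[Corollary 2.8 and Proposition 2.9]{FiSp} are quoted as precisely the needed input, the proof reduces to checking that the hypotheses of those results match the present setup, and the whole corollary can be stated as following "easily" from Proposition \ref{GVFCP} plus those citations.
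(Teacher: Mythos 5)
Your proposal is correct and follows exactly the route the paper intends: the paper gives no written proof beyond asserting that the corollary follows ``easily'' from Proposition \ref{GVFCP} together with \cite[Corollary 2.8 and Proposition 2.9]{FiSp}, and your argument fills in precisely those steps — (i)$\Leftrightarrow$(ii) by the translation $ID_P=D_P \iff I\nsubseteq P$, (i)$\Leftrightarrow$(iii) via Proposition \ref{GVFCP} and stability of $\ast_X$, and (iii)$\Leftrightarrow$(iv) by the basic-open-cover argument underlying the cited results of Finocchiaro--Spirito. No gaps; your caution about generization-closure is not an issue here since the finite-subcover argument you sketch works for $X$ itself.
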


Given  a semistar operation $\star$ on an integral domain $D$,
$D$ is called an {\it $H(\star)$-domain} \cite{FP} if  for every nonzero ideal $I$ of $D$ such that $I^\star=D$, there exists  a nonzero finitely generated ideal $J$ of $D$ such that $J\subseteq I$ and  $J^\star=D$.
Thus, given an integral domain $D$ and $X\sub\Spec(D)$ such that $\{D_{P} \mid P\in X \}$ is a defining family of quotient rings of $D$, by Proposition \ref{GVFCP} ,  $X$ has GV-finite character property if and only if  $D$ is an $H(\ast_X)$-domain, where $\ast_X$ is the (semi)star operation induced by the defining family $\{D_{P} \mid P\in X \}$ of $D$.

Note that the $H(\star)$-domains generalize in the semistar setting the $H$-domains introduced
by Glaz and Vasconcelos \cite{G-V}; more precisely, the  $H$-domains coincide with the $H(\vv)$-domains \cite[Section 2]{FP}

\medskip

The following theorem provides an algebraic version of the solution of the  problem when an essential domain is a P$\vv$MD. This problem was recently solved in  \cite{FT}
using topological methods.

\begin{theorem} \label{essentialPVMD}
Let $D$ be an integral.  Then the following are equivalent.
\begin{itemize}
\item[(i)] $D$ is a P\! $\vv$MD;
\item[(ii)] $D$ is essential and the set $ \{D_{P} \mid P\in \mathcal{E}(D) \}$ of all essential valuation overrings of $D$  has GV-finite character property.
 \item[(iii)] $D$ is essential and, for all $a,b \in D\setminus \{0\}$,    $aD \cap bD = F^{\vvs}$  for some $F \in \f(D)$   (in particular, $F \subseteq aD \cap bD$).
\end{itemize}
\end{theorem}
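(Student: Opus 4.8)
The plan is to prove the cyclic chain of implications (i) $\Rightarrow$ (iii) $\Rightarrow$ (ii) $\Rightarrow$ (i), which lets each step reuse the structural results already established in the paper. For (i) $\Rightarrow$ (iii): a P$\vv$MD is essential (this is recalled just before Theorem \ref{Theorem H}, via $D_Q$ a valuation domain for each $Q \in \QMax^{\ttts}(D)$), and in a P$\vv$MD one has $\vv = \ttt = \ww$ of finite type, so every $\vv$-ideal of finite type is $\ttt$-finite; applying this to $aD \cap bD$ (a $\vv$-ideal, being an intersection of principal ideals) and using that $t$-invertibility of $aD$ and $bD$ forces $aD \cap bD$ to be $\ttt$-finite — this is essentially the content of \cite[Lemma 8]{Zaf 2} quoted in the Remark after Theorem \ref{Theorem H} — gives $F \in \f(D)$ with $F^{\vvs} = aD \cap bD$; replacing $F$ by $F + (aD \cap bD)$, which does not change the $\vv$-closure, we may take $F \subseteq aD \cap bD$.

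For (iii) $\Rightarrow$ (ii): first I would show that the hypothesis in (iii) forces $D$ to be a \texttt{P}-domain (every associated prime $Q$ of a principal ideal has $D_Q$ a valuation domain), since by \cite[Lemma 8]{Zaf 2} (or rather its proof) the condition ``$aD \cap bD$ is $\vv$-finite for all $a,b$'' on an essential domain is exactly what upgrades essentiality to the P$\vv$MD condition; but to land in (ii) I must produce the GV-finite character statement for the full family $\{D_P \mid P \in \mathcal{E}(D)\}$. The cleanest route: in a P$\vv$MD, $\ww = \wedge_{\Max^{\ttts}(D)}$ and every essential valuation overring $D_P$ contains $D_M$ for some $M \in \Max^{\ttts}(D)$ (because the essential prime $P$ is contained in a $t$-maximal ideal and $D_P$ a valuation domain sandwiched above $D_M$ is a localization of $D_M$), so $\ast_{\mathcal{E}(D)} = \ww$, which is of finite type; then Proposition \ref{GVFCP} (equivalence of (i) and (iii) there, with $\bar\ast = \widetilde\ast = \ast$ since $\ast$ is stable) gives that the family has GV-finite character property. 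The work here is to verify that (iii) actually implies $D$ is a P$\vv$MD — that is exactly where I would invoke the essential + $\vv$-finiteness-of-$aD\cap bD$ criterion of \cite[Lemma 8]{Zaf 2}, applied to $D$ itself rather than to the $S_\lambda$.

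For (ii) $\Rightarrow$ (i): set $X := \mathcal{E}(D)$, so $D = \bigcap \{D_P \mid P \in X\}$ is an $X$-essential domain with $\ast_X$ of finite type by Corollary \ref{setGVFCP} ((i) $\Leftrightarrow$ (iii)). By Proposition \ref{essential}, every $\ast_X$-finite ideal is $\ast_X$-invertible. In particular every $F \in \f(D)$ is $\ast_X$-invertible; since $\ast_X$ is a (semi)star operation of finite type with $\ast_X \leq \vv$, hence $\ast_X \leq \ttt$ and $\widetilde{\ast_X} = \ast_X$, this says $F$ is $\ttt$-invertible (invertibility for a finite-type stable operation below $\ttt$ passes up to $\ttt$, using $\QMax^{\ast_X}(D) \supseteq \QMax^{\ttts}(D)$). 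Thus every nonzero finitely generated ideal of $D$ is $\ttt$-invertible, i.e.\ $D$ is a P$\vv$MD.

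\textbf{Main obstacle.} The delicate point is the step (iii) $\Rightarrow$ (ii): going from the pointwise condition ``$aD \cap bD$ is a finite-type $\vv$-ideal'' back to a structural statement about the whole family of essential valuation overrings. One must be careful that (iii) does not presuppose essentiality giving a \emph{nice} defining family — it only gives \emph{some} essential representation — and then bootstrap via \cite[Lemma 8]{Zaf 2} to conclude $D$ is a P$\vv$MD, after which identifying $\ast_{\mathcal{E}(D)}$ with $\ww$ and invoking finite type of $\ww$ is routine. An alternative that avoids citing \cite[Lemma 8]{Zaf 2} is to prove (iii) $\Rightarrow$ (i) directly: show $aD \cap bD = F^{\vvs}$ finitely generated for all $a, b$ implies every two-generated ideal is $\ttt$-invertible (by the classical identity relating $(aD : bD)$, $aD \cap bD$ and $\vv$-closures), then bootstrap from two generators to arbitrary finite ideals — but this duplicates known arguments, so routing through the essential-domain criterion is the economical choice.
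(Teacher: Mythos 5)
Your proposal is correct and follows essentially the same route as the paper: (i)$\Rightarrow$(iii) via $aD\cap bD=ab(a,b)^{-1}$ and $\ttt$-invertibility, (iii)$\Rightarrow$(i) via \cite[Lemma 8]{Zaf 2}, and (i)$\Leftrightarrow$(ii) via the finite-type characterization of the GV-finite character property (Corollary \ref{setGVFCP}) together with the fact that essential primes are $\ttt$-primes, so your cyclic arrangement merely factors (iii)$\Rightarrow$(ii) through (i). One harmless slip: in a P$\vv$MD one has $\ttt=\ww$ but not $\vv=\ttt$ in general; you only need $F^{\vvs}=F^{\ttts}$ for finitely generated $F$, which always holds.
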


\begin{proof}

(i) $\Rightarrow$ (ii) Since a P$\vv$MD is an essential domain, we next show that $\mathcal{E}(D)$ has GV-finite character property. Let $I$ be an ideal of
$D$ such that $I\nsubseteq P$ for every  $P\in \Spec(D)$ such that $D_P\in \mathcal{E}(D)$ (such a prime ideal is called {\it essential prime of} $D$). Since $\Max^{\ttts}(D)\sub \mathcal{E}(D)$, $\ast_{\mathcal{E}(D)}\le \ast_{\Max^{\ttts}(D)} = \ww$.
Hence $I^{\wws}=D$. Then, there exists  a nonzero finitely generated ideal $J$ of $D$ such that $J\subseteq I$ and $J^{\wws}=D$.
But, as each essential prime ideal $P$ is such that $PD_P$ is a $\ttt$-ideal in the valuation domain $D_P$, $P$ is a $\ttt$-ideal of $D$  \cite[Lemma 3.17]{Kan} and so it is contained in a maximal $\ttt$-ideal. Thus, we get
that $J\nsubseteq P$ for every essential prime $P$. Therefore,  $\mathcal{E}(D)$  has GV-finite character property.

 (ii) $\Rightarrow$ (i) By assumption, we have $D=\bigcap \{ D_{P} \mid P\in \mathcal{E}(D)\}$. By Corollary \ref{setGVFCP},  the (semi)star operation $\ast_{\mathcal{E}(D)}$ is of finite type, so $\ast_{\mathcal{E}(D)}\le \ttt$.
 Hence, each $\ttt$-maximal
ideal is a $\ast_{\mathcal{E}(D)}$-ideal. Thus, each $\ttt$-maximal ideal is contained in an   essential prime ideal, and hence
it is an essential prime. This   proves  that $D$ is a P$\vv$MD.

 (i) $\Rightarrow$ (iii)  Recall that
$aD\cap bD=ab(a, b)^{-1}$.  Since $D$ is a P$\vv$MD,  we have $((a,b)(a,b)^{-1})^{\ttts} =D$.
By a standard argument,  we can find a finitely generated subideal $F$ of $aD\cap bD$ such that  $aD \cap bD = F^{\ttts} = F^{\vvs}$.

 (iii) $\Rightarrow$ (i) is well known   \cite[Lemma 8]{Zaf 2}.
\end{proof}

\begin{remark}
{\rm
By the above characterization, an essential domain to be a P$\vv$MD it is equivalent to the condition that the (semi)star operation induced by the defining family is of finite type, and in this case it is the $\ww$-operation.

A \texttt{P}-domain need not be a P$\vv$MD, see an example in \cite{MZ}. This shows that the defining family of localizations at
associated primes of a \texttt{P}-domain do not have in general GV-finite character property, or equivalently, the (semi)star operation induced by this defining family is not in general of finite type.
}
\end{remark}

\medskip

\noindent {\bf Acknowledgement}

\noindent The authors thanks the referee for the evaluation of the paper and for the helpful suggestions.


\end{document}